\def\@abssec#1{\vspace{.05in}\footnotesize \parindent .2in 
{\bf #1. }\ignorespaces} 
\newtheorem{theorem}{Theorem}[section]
\newtheorem{lemma}[theorem]{Lemma}
\newtheorem{corollary}[theorem]{Corollary} 
\newtheorem{remark}[theorem]{Remark}
\def \Rm {\mathbb R}
\def \Nm {\mathbb N}
\def\II{\mathbb{I}}
\newcommand{\eps}{\varepsilon}
\newcommand{\ds}{\displaystyle}
\newcommand{\calL}{\mathcal L}
\newcommand{\calF}{\mathcal F}
\newcommand{\calE}{\mathcal E}
\newcommand{\Tr}{\textrm{Tr}}
\newcommand{\calS}{\mathcal S}
\newcommand{\calJ}{\mathcal J}
\newcommand{\calD}{\mathcal D}
\def\fref#1{{\rm (\ref{#1})}}
\newcommand{\cout}[1]{}
\def\un{{\mathbbmss{1}}}
\newcommand{\be}{\begin{equation}}
\newcommand{\ee}{\end{equation}}
\newcommand{\bea}{\begin{eqnarray}}
\newcommand{\eea}{\end{eqnarray}}
\newcommand{\bee}{\begin{eqnarray*}}
\newcommand{\eee}{\end{eqnarray*}}
\newcommand{\bal}{\begin{align*}}
\newcommand{\eal}{\end{align*}}
\DeclarePairedDelimiter\bra{\langle}{\rvert}
\DeclarePairedDelimiter\ket{\lvert}{\rangle}
\begin{document}
{\title{Local smoothing for the quantum Liouville equation}}

 \author{Olivier Pinaud \footnote{pinaud@math.colostate.edu}}
 \affil{Department of Mathematics, Colorado State University, Fort Collins CO, 80523}

\maketitle

\begin{abstract}
 We analyze in this work the regularity properties of the density operator solution to the quantum Liouville equation. As was recently done for the Strichartz inequalities, we extend to systems of orthonormal functions the local smoothing estimates satisfied by the solutions to the Schr\"odinger equation. We show in particular that the local density associated to the solution to the free, linear, quantum Liouville equation admits locally fractional derivatives of given order provided the data belong to some Schatten spaces.
\end{abstract}

\section{Introduction}
The quantum Liouville equation (also referred to as the von Neumann equation) describes the evolution of a (possibly infinite) statistical ensemble of quantum particles. The main object of interest is the \textit{density operator}, denoted by $u$ in the sequel, which is in general a trace class, self-adjoint and nonnegative operator on some Hilbert space (here $L^2(\Rm^d)$, where $d\geq 1$ is dimension). With physical constants set to one, the free, linear Liouville equation reads
\be \label{liou}
\left\{
\begin{array}{l}
i \partial_t u=\big[-\Delta,u \big]+\varrho,\\
u(t=0)=u_0,
\end{array}
\right.
\ee
where $u_0$ and $\varrho$ are given operators, $[\cdot, \cdot]$ denotes commutator between operators and $\Delta$ is the usual Laplacian on $\Rm^d$. Roughly speaking, \fref{liou} can be seen as a large (or infinite) system of coupled Schr\"odinger equations. It is then an interesting problem to understand how the dispersive and regularity properties of the solutions to the Schr\"odinger equation translate to the solutions to the Liouville equation. Indeed, setting $\varrho=0$ for the sake of concreteness, the solution $u$ reads
$$
u(t)=\sum_{j \in \Nm} \lambda_j \ket{e^{i t\Delta}u_j} \bra{e^{it \Delta} u_j} \qquad \textrm{if} \qquad u_0 =\sum_{j \in \Nm} \lambda_j \ket{u_j} \bra{u_j},
$$
where the $\lambda_j$ are here positive numbers, the $u_j$ are orthonormal in $L^2(\Rm^d)$, and we used the Dirac notation for the rank one projector $\varphi \mapsto u_j (u_j,\varphi) \equiv \ket{u_j} \langle u_j \vert\varphi \rangle$. The mathematical properties of the operator $e^{it \Delta}$ are well known, and the question is to understand how they translate to the collection defined by $u$. This has been an extensive subject of research, and some of the most notable results are the following: 
\begin{itemize}
\item[-] \textit{The Lieb-Thirring inequalities \cite{LT,LP}.} They are the density operator counterpart of Gagliardo-Nirenberg-Sobolev inequalities: for $u_0$ as before, if we define formally the local density $n_{u_0}$ and the local kinetic energy $\calE_{u_0}$ by
$$
n_{u_0}:=\sum_{j \in \Nm} \lambda_j |u_j|^2, \qquad \calE_{u_0}:=\sum_{j \in \Nm} \lambda_j |\nabla u_j|^2,
$$
an example of such inequalities is 
$$
\|n_{u_0}\|_{L^q(\Rm^d)} \leq C \left(\sum_{j \in \Nm} \lambda_j^p \right)^{\theta/p} \|\calE_{u_0}\|^{1-\theta}_{L^1(\Rm^d)}
$$
where $p \in [1,\infty]$ and
$$
\theta=\frac{2p}{(d+2)p-d}, \qquad q=\frac{(d+2)p-d}{dp-(d-2)}.
$$
\item[-] \textit{The Strichartz inequalities \cite{LewStri,FrankSabin}.} They are generalizations to orthonormal systems of the classical Strichartz estimates for the Schr\"odinger equation, and read, see \cite{FrankSabin}, Theorem 8,
$$
\left\|\sum_{j \in \Nm} \lambda_j |e^{i t\Delta}u_j|^2 \right\|_{L^p_tL^q_x(\Rm\times \Rm^d)} \leq \left( \sum_{j \in \Nm} \lambda_j^{\frac{2q}{q+1}}\right)^{\frac{q+1}{2q}},$$
with $p,q,d \geq 1$,
$$
\frac{2}{p}+\frac{d}{q}=d, \qquad 1 \leq q <1+\frac{2}{d-1}.
$$
\end{itemize}

These estimates on orthonormal systems have some remarkable properties: as mentioned in \cite{LewStri,FrankSabin}, if $\lambda_j=0$ for $j>N$, then they behave much better in terms of $N$ than the estimates obtained by applying the triangle inequality and the scalar estimates for the Schr\"odinger group $e^{i t\Delta}$. This can be seen as a consequence of the orthogonality of the $u_j$, while that latter property is not used with the triangle inequality. Another interesting feature is the following: standard ways to define rigorously the local density $n_{u_0}$ are either to assume that $u_0$ is trace class (i.e. $\sum_{j \in \Nm} \lambda_j < \infty$), or to assume that $u_0$ enjoys some smoothness, e.g. 
\be \label{regu0}\Tr \left((\II-\Delta)^{\beta/2} (u_0)^2 (\II-\Delta)^{\beta/2}\right)^p < \infty
\ee 
for appropriate $p \geq 1$ and $\beta\geq 0$, see e.g. \cite{sabin1} (above $\Tr$ denotes the operator trace). The Strichartz estimates offer much weaker conditions that justify the definition of the local density: the local density of the solution $u$ to the Liouville equation for $\varrho=0$ is defined in the space $L^p_tL^q_x$ as soon as $u_0$ lies in Schatten space $\calJ_{2q/(q+1)}$. This can be seen as a combined effect of the orthogonality of the $u_j$ and the dispersive effects of the Schr\"odinger operator. The situation is similar when $\varrho \neq 0$.\\

Our motivation in this work is to pursue the analysis of the properties of the solutions to the Liouville equation in the light of those of the Schr\"odinger equation. We are interested in the \textit{local smoothing effect} discovered by Constantin and Saut \cite{ConstSaut}, Sjolin \cite{sjolin} and Vega \cite{vega}: if $v_0 \in L^2(\Rm^d)$, then $v=e^{it\Delta}v_0$ admits locally spatial fractional derivatives of order $1/2$. The real interest in this result is the gain in differentiability, and not the gain in integrability obtained in turn by using Sobolev embeddings since the Strichartz estimates provide better exponents. When $u_0$ and $\varrho$ are in some Schatten spaces of order $p\geq 1$, we show that the local density $n_u$ admits locally spatial fractional derivatives of order $\beta$, where $\beta$ depends on the dimension $d$ and $p$. In particular, $\beta$ decreases as $p$ increases. Note that here again, the local density $n_u$ can be defined without a trace class assumption or the regularity \fref{regu0} on the data: this is a combined effect of the orthogonality and of the local gain in derivatives; the latter provides local estimates similar to \fref{regu0} on the solution $u$, which in turn allow one to define $n_u$. We also obtain results on the trace of the operator $\chi (\II-\Delta)^{\beta/2} u (\II-\Delta)^{\beta/2} \chi$ ($\chi$ is a smooth cut-off) in terms of the Schatten norms of $u_0$ and $\varrho$. 

To conclude this introduction, we would like to mention that Lieb-Thirring inequalities, Strichartz inequalities and local smoothing estimates are important in the analysis of non-linear quantum systems, see e.g. \cite{sabin1, sabin2}.

The paper is organized as follows: we introduce some notation in section \ref{secresults}, recall one of the main theorems of Constantin and Saut, and state our main results in Theorem \ref{th1} and Corollary \ref{cor1}. We present as well in Lemma \ref{pr1} a generalization of the results of Constantin and Saut that is the main ingredient in the proof of Theorem \ref{th1}. Sections \ref{proofth}, \ref{prooflem} and \ref{proofcor} are devoted to the proofs of Theorem \ref{th1}, Lemma \ref{pr1} and Corollary \ref{cor1}.\\

\noindent \textbf{Acknowledgment.} This work is supported by NSF CAREER grant DMS-1452349. The author would like to thank the anonymous referee for the suggested corrections.

\section{Main results} \label{secresults}
We start with some notation.

\paragraph{Notation.} We denote by $\calF_x$ the Fourier transform with respect to the $x \in \Rm^d$ variable, and by $\calF$ the Fourier transform with respect to $(t,x)$, with the convention
$$
\calF_x \varphi(\xi)=\int_{\Rm^d} e^{-i x \cdot \xi} \, \varphi(x) dx.
$$ For $\beta \in \Rm$, let $\Lambda_\beta:=(\II-\Delta)^{\beta/2}$, and let $D^\beta:=(-\Delta)^{\beta/2}$ be the fractional Laplacian, that we will only consider for $\beta \in (0,1/2]$. The operator $\Lambda_\beta$ and $D^\beta$ are defined in the Fourier space as pseudo-differential operators with symbols $(1+|\xi|^2)^{\beta/2}$ and $|\xi|^{\beta}$. We systematically use $r'$ for the conjugate exponent of $r \in [1,\infty]$ defined as usual by $\frac{1}{r}+\frac{1}{r'}=1$.
For $\beta \in \Rm$ and $p\in[1,\infty]$, we will use the potential space $H^{\beta,p}(\Rm^d)$ defined by $H^{\beta,p}(\Rm^d)=\{ \varphi \in \calS'(\Rm^d), \Lambda_\beta \varphi \in L^p(\Rm^d) \}$ with norm $\| \cdot \|_{H^{\beta,p}(\Rm^d)}=\| \Lambda_\beta \cdot \|_{L^p(\Rm^d)}$, where $\calS'(\Rm^d)$ is the space of tempered distributions on $\Rm^d$. We denote as well by $\calS(\Rm^d)$ the Schwartz space and by $C_0^\infty(\Rm^d)$ the space of infinitely differentiable functions on $\Rm^d$ with compact support. The space of bounded operators on $L^2(\Rm^d)$ is denoted by $\calL(L^2(\Rm^d))$ and its associated norm is $\| \cdot \|$. The Schatten spaces on $L^2(\Rm^d)$ are denoted by $\calJ_p$, $p \in [1,\infty]$ with the convention $\calJ_\infty=\calL(L^2(\Rm^d))$.

We decompose the solution $u$ to \fref{liou} into homogeneous and non-homogeneous parts as  $u=\Upsilon+\Gamma$ with
\be \label{defop}
\left\{
\begin{array}{ll} \ds \Upsilon(t):=e^{i t \Delta} \,u_0\, e^{-i t\Delta}, &\qquad \ds \Gamma(t):=\int_0^t e^{i (t-s) \Delta} \varrho(s) e^{-i (t-s) \Delta} ds\\
 \ds \sigma_\beta(t) := \chi_t\, \Lambda_\beta\, \Upsilon(t)\,  \Lambda_\beta\,  \chi_t, &\qquad \ds \gamma_\beta(t) := \chi_t\, \Lambda_\beta\, \Gamma(t)\,  \Lambda_\beta\,  \chi_t,
\end{array}
\right.
\ee
where $\chi_t \equiv \chi(t,x) \in C_0^\infty(\Rm^{d+1})$, $u_0 \in \calJ_p$, $\varrho \in L^1(\Rm,\calJ_p)$, and $u_0,\varrho$ are self-adjoint operators. We do not assume here that $u_0$ and $\varrho$ have a particular sign. It is then a classical matter that $u$ is the unique mild solution to \fref{liou} and belongs to $C^0(\Rm,\calJ_p)$. Assuming that $\Upsilon, \Gamma \in C^0(\Rm,\calJ_p)$ admit the decompositions
$$
\Upsilon=\sum_{j \in \Nm} \lambda_j \ket{\psi_j} \bra{\psi_j}, \qquad \Gamma=\sum_{j \in \Nm} \mu_j \ket{\phi_j} \bra{\phi_j},
$$
where $\lambda_j$ and $\mu_j$ are real and $(\psi_i,\psi_j)_{L^2(\Rm^d)}=(\phi_i,\phi_j)_{L^2(\Rm^d)}=\delta_{ij}$, we define formally the local densities of $\sigma_\beta$ and $\gamma_\beta$ by
\be \label{defn}
n_{\sigma_\beta}= \sum_{j \in \Nm} \lambda_j | \chi_t  \Lambda_\beta \psi_j |^2, \qquad n_{\gamma_\beta}= \sum_{j \in \Nm} \mu_j | \chi_t  \Lambda_\beta \phi_j|^2.
\ee
For a trace class operator $u$, the local density $n_u$ is defined rigorously by duality by
$$
\int_{\Rm^d} n_u(x) \varphi(x) dx=\Tr \big( u \varphi \big), \qquad \forall \varphi \in L^\infty(\Rm^d),
$$
where the multiplication operator by $\varphi$ and the function $\varphi$ are identified (this will be systematically done in the sequel).

Before stating our main results, we recall the important estimates of Constantin and Saut \cite{ConstSaut}: 
\begin{theorem} \label{thCS} Let $q,r' \in [1,2]$, $\beta>0$ such that
$$
\left\{
\begin{array}{l} \ds
\beta<\frac{1}{r}-d\left(\frac{1}{q}-\frac{1}{r}\right), \quad \textrm{if}\qquad r\geq 2, \quad r>q\\
\beta \leq \frac{1}{2}\quad \textrm{if}\qquad r=q=2.
\end{array}
\right.
$$ 
Then, for $\chi_t \equiv \chi \in C_0^\infty(\Rm^{d+1})$, $\varphi_t \equiv \varphi \in \calS(\Rm^{d+1})$, the following estimate holds
$$
\left(\int_{\Rm^d} (1+|\xi|^2)^{\beta q /2} |\calF(\chi_t \varphi_t)(-|\xi|^2,\xi)|^q d\xi \right)^{1/q} \leq C_\chi \| \varphi_t\|_{L^{r'}(\Rm^{d+1})}.
$$ 
The constant $C_\chi$ above depends on $\chi$ but not on $\varphi$.
\end{theorem}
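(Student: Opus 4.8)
The plan is to dualize the estimate, reducing it to a purely \emph{local} space-time bound for the free Schr\"odinger group, then to treat the sharp $L^2$ case directly and deduce the full range by a Littlewood--Paley argument.

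\emph{Step 1 (duality).} Dualizing the $L^q(\Rm^d)$ norm on the left-hand side, writing $\calF(\chi_t\varphi_t)(-|\xi|^2,\xi)=\int_{\Rm^{d+1}}e^{i(t|\xi|^2-x\cdot\xi)}\chi_t\varphi_t\,dt\,dx$, and using Fubini together with H\"older in $(t,x)$, one reduces (after harmless reflections and conjugations) the claimed inequality to the dual estimate: for every $V\in\calS(\Rm^d)$,
\[
\big\|\,\chi_t\,e^{it\Delta}V\,\big\|_{L^r(\Rm^{d+1})}\le C_\chi\,\big\|\,(1+|\xi|^2)^{-\beta/2}\,\calF_x V\,\big\|_{L^{q'}(\Rm^d)}.\qquad(\star)
\]
Here $r\ge2$ and $q'\ge2$. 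The crucial feature is that $\varphi$ enters only through the compactly supported product $\chi_t\varphi_t$, so only local space-time estimates for $e^{it\Delta}$ are needed; the global versions of $(\star)$ would fail when $r'<2$.

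\emph{Step 2 (the $L^2$ endpoint).} When $r=q=2$ and $\beta\le\tfrac12$, $(\star)$ is the classical local smoothing inequality $\|\chi_t e^{it\Delta}V\|_{L^2(\Rm^{d+1})}\le C_\chi\|V\|_{H^{-1/2}(\Rm^d)}$. I would dominate $|\chi_t|$ by the indicator of a space-time box and split $V$ into low and high frequencies; the low-frequency part is elementary (use the compact support of $\chi_t$ and Bernstein's inequality). For the high-frequency part, discard the time cut-off and compute, for fixed $x$, the time-Fourier transform of $t\mapsto e^{it\Delta}V(x)$: it is carried by the paraboloid $\tau=-|\xi|^2$, and passing to polar coordinates in $\xi$ produces the Jacobian weight $|\xi|^{-1}$, which is the origin of the gain of half a derivative. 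The remaining spatial integral is controlled by the local $L^2$ extension estimate for the sphere, $\|\widehat{F\,d\sigma}\|_{L^2(B_R)}\le C R^{1/2}\|F\|_{L^2(\Sm^{d-1})}$, and matching the powers of the radial variable reproduces exactly the $H^{-1/2}$ norm. (There is a minor low-frequency subtlety, most visible for $d=1$, precisely resolved by this low/high splitting.)

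\emph{Step 3 (the remaining range).} Decompose $V=V_0+\sum_{N=2^j,\,j\ge0}V_N$ with $\calF_x V_N$ supported in $\{|\xi|\sim N\}$ and $\calF_x V_0$ in $\{|\xi|\lesssim1\}$; the piece $V_0$ is handled trivially since $\chi_t\in C_0^\infty$ and the frequencies are bounded. For $N\ge1$, interpolate the frequency-localized form of Step 2, namely $\|\chi_t e^{it\Delta}V_N\|_{L^2(\Rm^{d+1})}\le C N^{-1/2}\|V_N\|_{L^2}$, with the crude bound $\|\chi_t e^{it\Delta}V_N\|_{L^\infty(\Rm^{d+1})}\le\|\chi\|_\infty\|\calF_x V_N\|_{L^1}\le C N^{d/2}\|V_N\|_{L^2}$, to obtain $\|\chi_t e^{it\Delta}V_N\|_{L^r(\Rm^{d+1})}\le C N^{d/2-(d+1)/r}\|V_N\|_{L^2}$. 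Since $q'\ge2$, H\"older on the shell $\{|\xi|\sim N\}$ gives $\|V_N\|_{L^2}\le C N^{d(1/2-1/q')}\|\calF_x V_N\|_{L^{q'}}\le C N^{d(1/2-1/q')+\beta}\|(1+|\xi|^2)^{-\beta/2}\calF_x V_N\|_{L^{q'}}$, whence
\[
\big\|\chi_t e^{it\Delta}V_N\big\|_{L^r(\Rm^{d+1})}\le C\,N^{-\delta}\,\big\|(1+|\xi|^2)^{-\beta/2}\calF_x V_N\big\|_{L^{q'}(\Rm^d)},\qquad \delta:=\Big(\tfrac1r-d\big(\tfrac1q-\tfrac1r\big)\Big)-\beta ,
\]
and $\delta>0$ is exactly the hypothesis. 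Summing over $j$ by H\"older in the dyadic index (exponents $q$ and $q'$) and using the bounded overlap of the frequency supports yields $(\star)$. The excluded endpoint $\beta=\tfrac12$, $r=q=2$ --- where $\delta=0$ and the series fails to converge --- is the one covered by Step 2.

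\emph{Main obstacle.} The one substantial ingredient is the sharp $L^2$ endpoint in Step 2: the bound $\|\chi_t e^{it\Delta}V\|_{L^2}\le C\|V\|_{H^{-1/2}}$ with the correct exponent rests on the sharp local $L^2$ restriction/extension estimate for the paraboloid, which follows from Schur's test when $d\le2$ but genuinely requires $L^2$-orthogonality (equivalently, the $L^2$ trace theorem for $\Sm^{d-1}$) when $d\ge3$. Everything else is bookkeeping, the only delicate point being to carry the parabolic scaling through the dyadic blocks so that the admissible range of $\beta$ emerges precisely as $\beta<\tfrac1r-d\big(\tfrac1q-\tfrac1r\big)$.
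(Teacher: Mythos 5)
Your proposal is essentially correct, but note that the paper does not prove this statement at all: Theorem \ref{thCS} is recalled verbatim from Constantin and Saut \cite{ConstSaut}, so there is no internal proof to match. Judged on its own, your route --- dualize to the local estimate $\|\chi_t e^{it\Delta}V\|_{L^r}\lesssim\|(1+|\xi|^2)^{-\beta/2}\calF_xV\|_{L^{q'}}$, prove the $r=q=2$, $\beta\le 1/2$ endpoint by the Vega/Sj\"olin polar-coordinates-plus-trace-lemma argument, and then recover the open range by dyadic frequency decomposition, $L^2$--$L^\infty$ log-convexity on each block, H\"older on the shell, and $\ell^q$--$\ell^{q'}$ H\"older in the dyadic index --- is complete modulo standard facts, and your exponent bookkeeping checks out: the block gain is $N^{-\delta}$ with $\delta=\frac1r-d(\frac1q-\frac1r)-\beta>0$, which is exactly the hypothesis, and the non-summable endpoint $\delta=0$ is precisely the case you handle separately in Step 2. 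This differs from Constantin--Saut's original argument, which stays entirely on the Fourier side: they write $\calF(\chi_t\varphi_t)(-|\xi|^2,\xi)$ as a convolution with $\calF\chi$ and exploit the rapid decay of $\calF\chi$ together with the geometric fact that a unit ball meets a unit neighborhood of the paraboloid $\tau=-|\xi|^2$ in a set of measure $O((1+|\xi|)^{-1})$; that proof has the advantage of showing directly that the compactly supported cut-off can be replaced by the weight $(1+|x|)^{-s}$, $s>1/2$, which the paper explicitly relies on in Remark \ref{remchi} and in the proof of Lemma \ref{pr1}. Your low-frequency piece, by contrast, uses the compactness of the time support, so the extension to such weights would require an extra argument. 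One small point to make explicit if you write this up: the frequency-localized bound $\|\chi_te^{it\Delta}V_N\|_{L^2}\le CN^{-1/2}\|V_N\|_{L^2}$ must come from the homogeneous ($\dot H^{-1/2}$) form of the endpoint so that the constant is uniform in $N$; your Step 2, which isolates the half-derivative gain from the Jacobian $|\xi|^{-1}$, does deliver exactly that.
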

\begin{remark} \label{remdual} We will also use the dual form of Theorem \ref{thCS}. Under the same hypotheses as above, we have
$$
\left(\int_{\Rm^{d+1}} |\chi(t,x)|^r |\Lambda_\beta e^{it \Delta}\varphi(x) |^r dtdx \right)^{1/r} \leq C_\chi \|\varphi\|_{L^{q}(\Rm^{d})},
$$ 
which shows the local gain of a derivative of order $\beta$. 
\end{remark}
\begin{remark}\label{remchi}
It is known that the condition $\chi \in C^\infty_0(\Rm^{d+1})$ is not necessary in Theorem \ref{thCS}. In particular, when $q=r=2$, the function $\chi(t,x)=(1+|x|)^{-s}$, for $s>1/2$ is sufficient, see e.g. \cite{vega}. A close look at the proof of Theorem \ref{thCS} shows that in the general case where $r\geq 2$, $r>q$, the previous function is also sufficient. We will use that observation in the proof of our main result.
\end{remark}

Our first result concerns the local densities $n_{\sigma_\beta}$ and $n_{\gamma_\beta}$, that we show are properly defined in some $L^q(\Rm^{d+1})$ space provided the data are in appropriate $\calJ_p$ spaces. We also show that the operators $\sigma_\beta$ and $\gamma_\beta$ are trace class. 
\begin{theorem}\label{th1}
Let $n_{\sigma_\beta}$ and $n_{\gamma_\beta}$ be the local densities defined formally in \fref{defn} with $u_0 \in \calJ_p$ and $\varrho \in L^1(\Rm,\calJ_p)$ for $p \in [1,\frac{2d}{2d-1})$. Then, $n_{\sigma_\beta}$ and $n_{\gamma_\beta}$ belong to $L^{r/2}(\Rm^{d+1})$, and we have the estimates
\be \label{estnth}
\|n_{\sigma_\beta}\|_{L^{r/2}(\Rm^{d+1})} \leq C_\chi \|u_0\|_{\calJ_p}, \qquad \|n_{\gamma_\beta}\|_{L^{r/2}(\Rm^{d+1})} \leq C_\chi \|\varrho\|_{L^1(\Rm,\calJ_p)},
\ee
for $r \in [2,\infty)$ and  $\beta >0$ such that 
$$
\left\{
\begin{array}{l} \ds
\ds \beta < \frac{1+d}{r}-\frac{d}{2}, \qquad \textrm{if} \qquad p=1 \qquad \textrm{and} \qquad r>2\\[2mm]
\ds \beta \leq \frac{1}{2}, \qquad \textrm{if} \qquad p=1 \qquad \textrm{and} \qquad r=2\\[2mm]
\ds \beta < \frac{1+d}{r}-\frac{d}{2}-\frac{d}{p'}, \qquad \textrm{if} \qquad  p>1.
\end{array}
\right.
$$
Moreover, under the same conditions as above with $r=2$, we have the following estimates on the operators $\sigma_\beta$ and $\gamma_\beta$ defined in \fref{defop}:
\be \label{esttraceth}
\|\sigma_\beta\|_{L^1(\Rm,\calJ_1)} \leq C_\chi \|u_0\|_{\calJ_p}, \qquad \|\gamma_\beta\|_{L^1(\Rm,\calJ_1)} \leq C_\chi \|\varrho\|_{L^1(\Rm,\calJ_p)}.
\ee
The constant $C_\chi$ in \fref{estnth}-\fref{esttraceth} depends on $\chi$ but not on $u_0$ and $\varrho$.
\end{theorem}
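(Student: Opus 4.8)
The plan is to reduce the operator statements to the scalar local-smoothing estimate of Theorem~\ref{thCS} (in its dual form, Remark~\ref{remdual}) applied simultaneously to all eigenfunctions, exploiting orthonormality through a $TT^*$/duality argument rather than the triangle inequality. First I would treat the homogeneous part $\sigma_\beta$; the inhomogeneous part $\gamma_\beta$ follows by writing $\Gamma(t)=\int_0^t e^{i(t-s)\Delta}\varrho(s)e^{-i(t-s)\Delta}\,ds$, applying Minkowski's integral inequality in $s$, and invoking the homogeneous bound for each fixed $s$ with $u_0$ replaced by $\varrho(s)$, then integrating $\|\varrho(s)\|_{\calJ_p}\,ds$.

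For $\sigma_\beta$, the key object is the operator $A:=\chi_t\,\Lambda_\beta\,e^{it\Delta}:L^2(\Rm^d)\to L^2(\Rm^{d+1})$, so that $\Upsilon(t)=e^{it\Delta}u_0e^{-it\Delta}$ gives $\sigma_\beta = A\,u_0\,A^*$ in the sense that $n_{\sigma_\beta}(t,x)=\sum_j\lambda_j|A\psi_j(t,x)|^2$ with $\psi_j$ the eigenbasis of $u_0$. Remark~\ref{remdual} says precisely that $A$ is bounded from $L^q(\Rm^d)$ to $L^r(\Rm^{d+1})$ for the admissible $(\beta,q,r)$. The natural strategy for the density bound $\|n_{\sigma_\beta}\|_{L^{r/2}}\le C\|u_0\|_{\calJ_p}$ is a duality argument: test against $V\in L^{(r/2)'}(\Rm^{d+1})$, $V\ge 0$, write $\int n_{\sigma_\beta}V = \Tr\big(u_0\,A^*\,M_V\,A\big)$ where $M_V$ is multiplication by $V$, and bound this by $\|u_0\|_{\calJ_p}\,\|A^*M_VA\|_{\calJ_{p'}}$ via the Schatten–Hölder inequality. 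One then needs $\|A^*M_VA\|_{\calJ_{p'}}\le C\|V\|_{L^{(r/2)'}}$; writing $A^*M_VA=(M_{\sqrt V}A)^*(M_{\sqrt V}A)$, this reduces to showing $M_{\sqrt V}A\in\calJ_{2p'}$ with norm $\lesssim\|V\|_{L^{(r/2)'}}^{1/2}=\|\sqrt V\|_{L^{(r/2)'\cdot 2}}$. This last Schatten bound on $M_{\sqrt V}\,\chi_t\,\Lambda_\beta\,e^{it\Delta}$ is the heart of the matter: for $p=1$ one needs $2p'=\infty$, i.e.\ only operator-norm boundedness, which is exactly Remark~\ref{remdual} with $r=q$; for $p>1$ one needs a genuine Schatten-$2p'$ bound, which is where the extra loss $-d/p'$ in the exponent of $\beta$ comes from.

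To get the Schatten-$2p'$ estimate for $p>1$ I would use a Kato–Seiler–Simon–type bound: operators of the form $g(x)f(-i\nabla)$ lie in $\calJ_s$ with $\|g(x)f(-i\nabla)\|_{\calJ_s}\le C\|g\|_{L^s}\|f\|_{L^s}$ for $s\ge 2$, combined with an interpolation between the operator-norm bound (from local smoothing, using the weight $\chi$ and the observation in Remark~\ref{remchi} that polynomial weights suffice) and an $L^2$-in-$(t,x)$ Hilbert–Schmidt bound on $M_{\sqrt V}\chi_t\Lambda_\beta e^{it\Delta}$; tracking the resulting exponents through the relations $2/p+d/q=\cdots$ and the admissibility constraints of Theorem~\ref{thCS} should yield exactly the stated range $p\in[1,\tfrac{2d}{2d-1})$, $\beta<\tfrac{1+d}{r}-\tfrac d2-\tfrac d{p'}$. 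The trace-class estimate \fref{esttraceth} is the special case $r=2$: there $n_{\sigma_\beta}\in L^1(\Rm^{d+1})$ and $\int n_{\sigma_\beta}\,dt\,dx=\int\Tr\big(\chi_t\Lambda_\beta\Upsilon(t)\Lambda_\beta\chi_t\big)\,dt=\|\sigma_\beta\|_{L^1(\Rm,\calJ_1)}$ because $\sigma_\beta(t)$ is a nonnegative operator when $u_0$ is — but since we do \emph{not} assume a sign on $u_0$, one instead splits $u_0$ into positive and negative parts (or uses $|u_0|$), applies the nonnegative case to each, and recombines; the identity $\|B u_0 B^*\|_{\calJ_1}\le\|\,|u_0|^{1/2}B^*\|_{\calJ_2}^2$ makes the $r=2$ bound literally the Hilbert–Schmidt computation $\int|\chi_t\Lambda_\beta e^{it\Delta}\psi_j|^2\,dt\,dx$ summed against $|\lambda_j|$.

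I expect the main obstacle to be the Schatten-exponent bookkeeping for $p>1$: one must interpolate the local smoothing inequality (which is naturally an $L^q\to L^r$ \emph{scalar} bound) against a Hilbert–Schmidt endpoint while simultaneously respecting the constraint $r'\le 2\le r$ and the strict inequality defining the admissible $\beta$ in Theorem~\ref{thCS}, and then verify that the diagonal choice forced by the duality (the exponent $(r/2)'$ for $V$) is compatible with all of these — in particular that the condition $p<\tfrac{2d}{2d-1}$ is exactly what is needed for the relevant $\beta$-window to be nonempty. A secondary technical point is justifying the formal manipulations ($\Tr(u_0 A^*M_VA)$, the interchange of sum and integral in the definition \fref{defn}, and the density/closure argument that $n_{\sigma_\beta}$ is well-defined as an $L^{r/2}$ function and not merely a formal series) — this is handled by first proving all estimates for finite-rank $u_0$ and Schwartz data, then passing to the limit using the a priori bounds, which is routine once the core inequality is in hand.
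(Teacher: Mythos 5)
Your framework for the trace-class case $p=1$ is essentially the paper's: write the pairing $\int n_{\sigma_\beta}\,V=\Tr\big(u_0\,A^*M_VA\big)$ with $A=\chi_t\Lambda_\beta e^{it\Delta}$, bound it by $\|u_0\|_{\calJ_1}\,\|A^*M_VA\|$, and estimate the operator norm by two applications of Theorem~\ref{thCS} (one for $A$, one for $A^*$, with a H\"older step between them). The trace estimate \fref{esttraceth} is indeed the $r=2$ diagonal, and the sign decomposition $u_0=u_{0,+}-u_{0,-}$ is the right device for removing the positivity assumption.

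The gap is in the case $p>1$. You propose to prove $\|M_{\sqrt V}A\|_{\calJ_{2p'}}\lesssim\|V\|_{L^{(r/2)'}}^{1/2}$ by interpolating an operator-norm bound (local smoothing) against a Hilbert--Schmidt endpoint, invoking a Kato--Seiler--Simon--type inequality. But Kato--Seiler--Simon is for product operators $g(x)f(-i\nabla)$, and $M_{\sqrt V}\chi_t\Lambda_\beta e^{it\Delta}$ is not of that form; more fatally, it has \emph{no} Hilbert--Schmidt endpoint. Its kernel is $K(t,x;y)=\sqrt{V(t,x)}\,\chi(t,x)\,(\Lambda_\beta e^{it\Delta})(x,y)$, and by Plancherel $\int_{\Rm^d}|(\Lambda_\beta e^{it\Delta})(x,y)|^2\,dy=\int_{\Rm^d}(1+|\xi|^2)^\beta\,d\xi=\infty$ for every $\beta\ge0$ and every $(t,x)$, so $\|K\|_{L^2(\Rm^{d+1}\times\Rm^d)}=\infty$ regardless of the weight $V\chi$. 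The paper is explicit that bounding the analogue of $A^*M_VA$ (its $F_\beta(s)$) in $\calJ_{p'}$ is precisely the step it does not know how to carry out; see the remark following the $p>1$ case. What the paper does instead is move the Schatten weight off the propagator and onto a genuine $g(x)f(-i\nabla)$ operator: it inserts $\II=J_\alpha J_\alpha^{-1}$ with $J_\alpha=\Lambda_\alpha(1+|x|^\alpha)$ and uses cyclicity to obtain
$$
\big|\Tr\big(e^{-is\Delta}\varrho(s)e^{is\Delta}F_\beta(s)\big)\big|
\le \Tr\big(J_\alpha^{-1}e^{-is\Delta}\varrho(s)e^{is\Delta}(J_\alpha^*)^{-1}\big)\,\big\|J_\alpha^*F_\beta(s)J_\alpha\big\|.
$$
The first factor is $\le\|\varrho(s)\|_{\calJ_p}\|J_\alpha^{-1}\|_{\calJ_{2p'}}^2$, and $J_\alpha^{-1}=(1+|x|^\alpha)^{-1}\Lambda_{-\alpha}$ \emph{is} of product form, so Kato--Seiler--Simon gives $\|J_\alpha^{-1}\|_{\calJ_{2p'}}<\infty$ exactly when $2\alpha p'>d$. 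The second factor is an operator-norm bound, but now with the extra weight $(1+|x|^\alpha)$ and extra derivatives $\Lambda_\alpha$ conjugated against the smoothing operator; establishing it requires the weighted Constantin--Saut estimate of Lemma~\ref{pr1}, proved by commuting $|x|^\alpha$ through $e^{-it\Delta}$ and handling the nonlocality of the fractional power. This is how the loss $-d/p'$ in the admissible $\beta$-range arises (it is the price $2\alpha$ subject to $2\alpha p'>d$), not from an interpolation argument. Your proposal therefore stalls at the central technical step for $p>1$, and the missing ingredient is precisely Lemma~\ref{pr1} together with the $J_\alpha$ trick.
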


Note that for $\beta$ to be positive,  the following conditions need to be satisfied:
$$
r\leq 2+\frac{2}{d}, \qquad p <\frac{2d}{2d-1}, \qquad \frac{1}{p'}<\frac{1+d}{rd}-\frac{1}{2}.
$$
Besides, as can be expected, the gain in differentiability is maximal when $p=1$, that is when the strongest assumption is made on the data. Whether Theorem \ref{th1} is optimal or not is an open question. We use next Theorem \ref{th1} to obtain a local gain of fractional derivatives for the local densities $n_\Upsilon$ and $n_\Gamma$ of the operators $\Upsilon$ and $\Gamma$.
\begin{corollary} \label{cor1} Let $u_0 \in \calJ_p$ and $\varrho \in L^1(\Rm, \calJ_p)$ for $p \in [1,\frac{2d}{2d-1})$ and $\beta > 0$ such that 
$$
\left\{
\begin{array}{l} \ds
\beta \leq \frac{1}{2}, \qquad \textrm{if} \qquad p=1\\[2mm]
\ds \beta < \frac{1}{2}-\frac{d}{p'}, \qquad \textrm{if} \qquad  p>1.
\end{array}
\right. 
$$ Then, for any $T>0$, any $v \in C_0^\infty(\Rm^d)$ and $\chi=|v|^2$, $D^\beta(\chi n_{\Upsilon})$ and $D^\beta(\chi n_{\Gamma})$ belong to $L^1(-T,T,L^{\frac{d}{d-\beta}}(\Rm^d))$, with the following estimates,
$$
\| D^\beta(\chi n_{\Upsilon}) \|_{L^1(-T,T,L^{\frac{d}{d-\beta}}(\Rm^d))}  \leq C_T \|u_0\|_{\calJ_p}, \quad \| D^\beta(\chi n_{\Gamma})\|_{L^1(-T,T,L^{\frac{d}{d-\beta}}(\Rm^d))} \leq C_T \|\varrho\|_{L^1(\Rm,\calJ_p)}.
$$
The constant $C_T$ above depends on $T$ and $v$ but not on $u_0$ and $\varrho$.
\end{corollary}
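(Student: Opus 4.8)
The plan is to deduce the corollary from Theorem \ref{th1} by choosing $r=2$ and relating the operator bounds on $\sigma_\beta$ (and $\gamma_\beta$) to $\Lambda_\beta$-weighted traces, then converting those traces into statements about the local density $n_\Upsilon$ and finally upgrading the $\Lambda_\beta$-gain to a gain for the homogeneous fractional Laplacian $D^\beta$. First I would observe that the admissible ranges of $\beta$ stated in the corollary are precisely the $r=2$ instances of the ranges in Theorem \ref{th1}: for $p=1$ we get $\beta\le 1/2$, and for $p>1$ we get $\beta<\frac{1+d}{2}-\frac{d}{2}-\frac{d}{p'}=\frac12-\frac{d}{p'}$. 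So with $\chi_t=\chi(x)=|v(x)|^2$ (time-independent, $v\in C_0^\infty(\Rm^d)$, hence certainly $\chi\in C_0^\infty(\Rm^{d+1})$ after multiplying by a fixed smooth time cutoff equal to $1$ on $[-T,T]$), Theorem \ref{th1} gives $\sigma_\beta\in L^1(\Rm,\calJ_1)$ with $\|\sigma_\beta\|_{L^1(\Rm,\calJ_1)}\le C_\chi\|u_0\|_{\calJ_p}$, and likewise for $\gamma_\beta$.

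Next I would identify the trace of $\sigma_\beta(t)=\chi\Lambda_\beta\Upsilon(t)\Lambda_\beta\chi$ with a weighted local density. Writing $\chi=|v|^2=\bar v v$ and using cyclicity of the trace, $\Tr\,\sigma_\beta(t)=\Tr\big(\bar v\Lambda_\beta\Upsilon(t)\Lambda_\beta v\big)$; since $\Upsilon(t)=\sum_j\lambda_j\ket{\psi_j}\bra{\psi_j}$ this equals $\sum_j\lambda_j|v\Lambda_\beta\psi_j|^2$ integrated over $\Rm^d$, i.e. the integral of $n_{\sigma_\beta}(t,\cdot)$. More usefully, for a test function $\varphi$, $\Tr\big(\varphi\,\sigma_\beta(t)\big)=\Tr\big(\Lambda_\beta v\varphi\bar v\Lambda_\beta\,\Upsilon(t)\big)$, which shows that the operator $\Lambda_\beta\chi\Upsilon(t)\chi\Lambda_\beta$ (equivalently $\sigma_\beta$ up to the cyclic rearrangement) is trace class for a.e. $t$, uniformly in $L^1_t$. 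The key analytic point is then the commutator/mapping estimate: I want to move the $\Lambda_\beta$ past the cutoff $\chi$ so that the weighted object $\chi\Lambda_\beta\Upsilon\Lambda_\beta\chi$ being trace class implies $\Lambda_\beta(\chi\,\Upsilon(t)\,\chi)\Lambda_\beta$, or rather the bilinear form controlling $D^\beta(\chi n_\Upsilon)$, is controlled. Concretely, $\chi n_\Upsilon(t) = \sum_j\lambda_j\,\chi|\psi_j|^2$, and I would write $\chi|\psi_j|^2=|v\psi_j|^2$ so that $\chi n_\Upsilon(t)=n_{w(t)}$ is the local density of the trace-class operator $w(t)=\sum_j\lambda_j\ket{v\psi_j}\bra{v\psi_j}$; then I must show $D^\beta n_{w(t)}\in L^{d/(d-\beta)}_x$ with an $L^1_t$ bound. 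The standard route is: for a trace-class nonnegative-type operator $w$ written via an orthonormal-ish family, $\|D^\beta n_w\|_{L^{d/(d-\beta)}}\lesssim \Tr\big((-\Delta)^{\beta/2}w(-\Delta)^{\beta/2}\big)$ by a Sobolev-type / dual Lieb--Thirring argument (pointwise $|D^\beta(fg)|$ type bound plus $H^{\beta}\hookrightarrow L^{2d/(d-2\beta)}$), and then $\Tr\big((-\Delta)^{\beta/2}w(-\Delta)^{\beta/2}\big)\le\Tr\big(\Lambda_\beta w\Lambda_\beta\big)$ since $|\xi|^{2\beta}\le(1+|\xi|^2)^{\beta}$. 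It remains to bound $\Tr(\Lambda_\beta w(t)\Lambda_\beta)$ by $\Tr(\sigma_\beta(t))$ up to commutators: $\Lambda_\beta v\psi_j$ versus $v\Lambda_\beta\psi_j$ differ by $[\Lambda_\beta,v]\psi_j$, and since $\beta\le 1/2<1$ and $v\in C_0^\infty$, the commutator $[\Lambda_\beta,v]$ is bounded on $L^2$ (pseudodifferential calculus, order $\beta-1\le -1/2<0$), so $\|\Lambda_\beta v\psi_j\|_{L^2}\lesssim\|v\Lambda_\beta\psi_j\|_{L^2}+\|\psi_j\|_{L^2}$; summing against $\lambda_j$ and using both $\sigma_\beta\in\calJ_1$ and $\Upsilon\in\calJ_p\subset\calJ_1$ (valid on $[-T,T]$ after using $L^\infty_t\calJ_p$ control, and $p<2d/(2d-1)$ so $\calJ_p\subset\calJ_1$) gives the $L^1_t$ bound on $\Tr(\Lambda_\beta w(t)\Lambda_\beta)$. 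Finally integrate in $t\in(-T,T)$ to conclude the stated estimate; the argument for $\gamma_\beta$ and $n_\Gamma$ is identical, replacing $\|u_0\|_{\calJ_p}$ by $\|\varrho\|_{L^1(\Rm,\calJ_p)}$ and noting $\Gamma\in C^0(\Rm,\calJ_p)$ as stated in the text.

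I expect the main obstacle to be the last technical link: passing rigorously from "$\sigma_\beta=\chi\Lambda_\beta\Upsilon\Lambda_\beta\chi$ is trace class in $L^1_t$" to "$D^\beta(\chi n_\Upsilon)\in L^1_t L^{d/(d-\beta)}_x$", i.e. justifying the commutation of $\Lambda_\beta$ with the cutoff and the reduction to the homogeneous $D^\beta$, all at the level of the local density rather than the operator. The commutator $[\Lambda_\beta,v]$ being bounded is clean for $\beta\in(0,1/2]$ since $\Lambda_\beta$ has symbol $(1+|\xi|^2)^{\beta/2}$ and $v$ is smooth compactly supported, but one must be careful that the relevant objects ($v\psi_j$, etc.) genuinely lie in $H^\beta$ so that the manipulations are not merely formal — this is where the trace-class-ness of $\sigma_\beta$ is really used, as it guarantees $\sum_j\lambda_j\|v\Lambda_\beta\psi_j\|_{L^2}^2<\infty$. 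The density/Sobolev inequality $\|D^\beta n_w\|_{L^{d/(d-\beta)}}\lesssim\Tr((-\Delta)^{\beta/2}w(-\Delta)^{\beta/2})$ is a known (dual Lieb--Thirring / Sobolev) estimate for $\beta\in(0,1)$, $d\ge1$ with $\beta<d$; I would cite or quickly reprove it via the fractional Leibniz rule and the embedding $\dot H^{\beta}(\Rm^d)\hookrightarrow L^{2d/(d-2\beta)}(\Rm^d)$ applied to each $v\psi_j$. Everything else — choosing $r=2$, matching $\beta$-ranges, using $\calJ_p\subset\calJ_1$ on the time interval, and summing — is routine.
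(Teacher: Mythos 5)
Your overall strategy --- reduce to the $r=2$ case of Theorem~\ref{th1}, write $\chi n_{\Upsilon}=n_{w}$ with $w=\sum_j\lambda_j\ket{v\psi_j}\bra{v\psi_j}$, and control $\|D^\beta n_w\|_{L^{d/(d-\beta)}}$ by $\Tr(D^\beta w D^\beta)$ via a fractional Leibniz/Sobolev argument --- is a sensible alternative to the paper's decomposition of $D^\beta|\varphi|^2$ into the $\overline{\varphi}D^\beta\varphi$ and $\calD_{\beta/2}$ pieces, and the observation that the admissible $\beta$-ranges are exactly the $r=2$ ranges of Theorem~\ref{th1} is correct. But there is a genuine gap in the commutator step that breaks the proof for $p>1$, which is the nontrivial part of the corollary.

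The error is the claim that $\calJ_p\subset\calJ_1$ for $p\in(1,\tfrac{2d}{2d-1})$. Schatten inclusions go the other way: $\calJ_1\subset\calJ_p$ for $p\ge 1$, so $\Upsilon(t)\in\calJ_p$ with $p>1$ does \emph{not} imply $\Upsilon(t)$ is trace class, and $\sum_j\lambda_j$ is generically infinite. Consequently the bound you propose, $\sum_j\lambda_j\|[\Lambda_\beta,v]\psi_j\|_{L^2}^2\lesssim\sum_j\lambda_j$ obtained from $\|[\Lambda_\beta,v]\|_{\calL(L^2)}<\infty$ (pseudodifferential operator of order $\beta-1<0$), diverges for $p>1$. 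Your proof works for $p=1$ but not for $p>1$. The way to fix this is exactly what the paper does: do not estimate the commutator term by the $j$-uniform operator norm. Instead write $[D^\beta,v]\psi_j(x)=c_{d,\beta}\int\frac{v(x)-v(y)}{|x-y|^{d+\beta}}\psi_j(y)\,dy$ explicitly, apply Cauchy--Schwarz in $y$ \emph{inside} the integral to pull the $j$-sum through and produce $(n_{\Upsilon}(t,y))^{1/2}$ under the integral sign (the terms $N_1,N_2$ in the paper), and then control the resulting expressions in terms of $\|n_{\Upsilon}(t,\cdot)\|_{L^q}$ via the Hardy--Littlewood--Sobolev inequality. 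The crucial final ingredient, which you omit entirely, is the orthonormal-system Strichartz estimate of Frank--Sabin (Theorem~15 in~\cite{FrankSabin}), which is what yields $\|n_{\Upsilon}\|_{L^{p'/d}(-T,T,L^{p/(2-p)}(\Rm^d))}\lesssim\|u_0\|_{\calJ_p}$ for the full range $p\in[1,\tfrac{2d}{2d-1})$ without trace-class assumptions. Without this, the commutator remainder cannot be closed for $p>1$.

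A secondary remark: your triangle-inequality reduction $\|D^\beta n_w\|_{L^{d/(d-\beta)}}\lesssim\sum_j\lambda_j\|D^\beta(v\psi_j)\|_{L^2}^2$ is valid, but it is worth noting that the paper is careful to avoid the triangle inequality on the most singular term (using instead Cauchy--Schwarz in $j$ and H\"older in $x$), and applies the triangle inequality only to the $\calD_{\beta/2}$-remainder. Both routes end up at $\sum_j\lambda_j\big(\|\phi_j^v\|_{L^2}^2+\|D^\beta\phi_j^v\|_{L^2}^2\big)$, so this is not where your argument fails; the failure is in how the commutator correction $D^\beta(v\psi_j)-vD^\beta\psi_j$ is summed against $\lambda_j$, as explained above.
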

\begin{remark} \label{intime} In Corollary \ref{cor1}, we only have an $L^1$ integrability in time since Theorem \ref{th1} was only exploited in the case $r=2$. The reason for this is explained in the proof of the corollary.
\end{remark}
The core of the proof of Theorem \ref{th1} is the following extension of Theorem \ref{thCS}. As will be clear further, the weight $|x|^\alpha+1$ is introduced in order to handle the case when $u_0$ and $\varrho$ are not trace class, but only in some $\calJ_p$ with $p>1$.
\begin{lemma} \label{pr1} Let $\phi_t \equiv \phi(t,x) \in \calS(\Rm^{d+1})$ and $\chi_t \equiv \chi(t,x) \in C_0^\infty(\Rm^{d+1})$. Then,  we have the following estimate:
$$
\left\| (|x|^\alpha+1) \Lambda_{\alpha+\beta} \int_{\Rm } dt \, e^{-i t \Delta} \left(\chi_t \phi_t\right) \right\|_{L^2(\Rm^d)} \leq C_\chi \| \phi_t \|_{L^{r'}(\Rm^{d+1})},
$$
for $\alpha \in (0,\frac{1}{4}]$, $\beta \geq 0$, and $ r \in [2,\infty)$, such that 
\be \label{conslem}
\left\{\begin{array}{l}
\ds 2\alpha+\beta < \frac{(d+1)}{r}-\frac{d}{2}, \qquad \textrm{if} \qquad r\in (2,\infty),\\
\ds 2\alpha+\beta \leq \frac{1}{2}, \qquad \textrm{if} \qquad r =2.
\end{array}
\right.
\ee
Moreover, the constant $C_\chi$ above depends on $\chi$ but not on $\phi$.
\end{lemma}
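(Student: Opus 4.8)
The plan is to reduce the weighted, shifted-order estimate to the un-weighted Constantin--Saut estimate of Theorem \ref{thCS} (in the form of Remark \ref{remdual}, and with the relaxed cut-off condition of Remark \ref{remchi}) by peeling off the weight $|x|^\alpha$ as an additional fractional derivative in Fourier space. First I would rewrite the left-hand side via Plancherel: with $\psi(\xi) := \calF\big(\chi_t\phi_t\big)(-|\xi|^2,\xi)$, the quantity $\int_{\Rm} dt\, e^{-it\Delta}(\chi_t\phi_t)$ has spatial Fourier transform exactly $\psi(\xi)$ (this is the standard identity $\calF_x\big(e^{-it\Delta}f\big)(\xi)=e^{it|\xi|^2}\calF_x f(\xi)$ integrated in $t$, i.e. a restriction of the space-time Fourier transform to the paraboloid). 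Hence
\[
\Big\| \Lambda_{\alpha+\beta}\!\int_{\Rm} dt\, e^{-it\Delta}(\chi_t\phi_t)\Big\|_{L^2(\Rm^d)}
= \Big\| (1+|\xi|^2)^{(\alpha+\beta)/2}\psi(\xi)\Big\|_{L^2_\xi},
\]
which is controlled by $C_\chi\|\phi_t\|_{L^{r'}(\Rm^{d+1})}$ directly from Theorem \ref{thCS} with $q=2$ and the given $r$, provided $\alpha+\beta$ satisfies the Theorem \ref{thCS} constraint, i.e. $\alpha+\beta<\frac{1+d}{r}-\frac d2$ for $r>2$ and $\alpha+\beta\le\frac12$ for $r=2$. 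This handles the ``$1\cdot\Lambda_{\alpha+\beta}$'' piece of the weight $|x|^\alpha+1$ (in fact with room to spare since we only need $\alpha+\beta$, not $2\alpha+\beta$).

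The substantive part is the $|x|^\alpha$-weighted term, $\big\||x|^\alpha \Lambda_{\alpha+\beta} F\big\|_{L^2}$ where $F(x)=\int dt\, e^{-it\Delta}(\chi_t\phi_t)$. The idea is to commute the polynomial weight $|x|^\alpha$ past $\Lambda_{\alpha+\beta}$ at the cost of turning it into a Fourier-side fractional derivative of order $\alpha$. More precisely, for $\alpha\in(0,1/4]$ one has the pointwise/Leibniz-type bound $\big||x|^\alpha \Lambda_{\alpha+\beta}F\big| \lesssim \Lambda_{\alpha+\beta}\big(|x|^\alpha F\big) + (\text{lower order})$, or equivalently, passing to Fourier space, $\||x|^\alpha \Lambda_{\alpha+\beta}F\|_{L^2}\lesssim \|D_\xi^\alpha\big[(1+|\xi|^2)^{(\alpha+\beta)/2}\hat F(\xi)\big]\|_{L^2_\xi}$, and a fractional Leibniz rule (Kato--Ponce, valid since $\alpha\le 1/4<1$) distributes the $D_\xi^\alpha$: one term keeps $D_\xi^\alpha$ on the symbol $(1+|\xi|^2)^{(\alpha+\beta)/2}$, producing effectively a symbol of order $\alpha+\beta-\alpha=\beta$ times $\hat F$ — no, one must be careful, differentiating the symbol lowers its order, giving order $\beta$; and the other term puts $D_\xi^\alpha$ on $\hat F=\psi$, i.e. on the restriction to the paraboloid, which is where the gain is spent. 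The key point is that $D_\xi^\alpha\psi(\xi)$ corresponds on the physical side to multiplying $\chi_t\phi_t$ by $|x|^\alpha$, so the worst term becomes $\big\|(1+|\xi|^2)^{(\alpha+\beta)/2} \calF(|x|^\alpha\chi_t\phi_t)(-|\xi|^2,\xi)\big\|_{L^2_\xi}$, to which Theorem \ref{thCS} applies again — now with $\chi$ replaced by $|x|^\alpha\chi$, which is still admissible (here Remark \ref{remchi} is used to allow this non-$C_0^\infty$ but polynomially-mild cut-off since $|x|^\alpha\chi$ decays like $\chi$ away from the support), yielding the bound $\lesssim \|\phi_t\|_{L^{r'}}$ under the condition $\alpha+\beta<\frac{1+d}{r}-\frac d2$. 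Collecting all terms, the binding constraint comes from a term that has effectively order $2\alpha+\beta$ — namely when $D_\xi^\alpha$ hits $\psi$ and we have already spent one $\alpha$ of derivative in the symbol argument, or when estimating the cross term where $\calF(|x|^\alpha\chi_t\phi_t)$ is weighted by a symbol of order $\alpha+\beta$ — which is exactly why the hypothesis reads $2\alpha+\beta<\frac{1+d}{r}-\frac d2$ (resp. $\le\frac12$ when $r=2$) rather than $\alpha+\beta$.

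I expect the main obstacle to be making the ``commute the weight through $\Lambda_{\alpha+\beta}$'' step rigorous: the fractional Leibniz rule on the Fourier side must be applied to $\psi(\xi)$, which is a \emph{restriction} of a space-time transform to the paraboloid, so one cannot freely differentiate it — instead one should differentiate \emph{before} restricting, i.e. work with the full space-time Fourier transform $\calF(\chi_t\phi_t)(\tau,\xi)$, distribute a $\xi$-derivative there using that $\partial_{\xi_k}\calF(\chi_t\phi_t) = -i\,\calF(x_k\chi_t\phi_t)$, and only then restrict $\tau=-|\xi|^2$; the chain rule from the $\xi$-dependence inside $\tau=-|\xi|^2$ generates an extra $\partial_\tau\calF(\chi_t\phi_t)=-i\calF(t\,\chi_t\phi_t)$ term, harmless because $t\chi_t$ is again an admissible cut-off. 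Managing the fractional (non-integer) order $\alpha$ in this restriction-commutation — rather than an integer derivative — is the delicate point; the restriction $\alpha\le 1/4$ is presumably what guarantees $2\alpha+\beta$ stays in the admissible range for meaningful $\beta\ge 0$ and keeps all fractional Leibniz error terms controllable. Once the weight is converted into finitely many applications of Theorem \ref{thCS} with the cut-offs $\chi$, $|x|^\alpha\chi$, $t\chi$ (all admissible by Remark \ref{remchi}), summing the pieces and absorbing lower-order contributions into the $+1$ part of the weight finishes the proof.
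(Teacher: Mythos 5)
Your high-level intuition is correct: the weight $|x|^\alpha$ should cost an extra $\alpha$ derivatives (turning the Constantin--Saut constraint $\alpha+\beta$ into the stated $2\alpha+\beta$), and the ``$+1$'' part of the weight together with the commutator $[|x|^\alpha,\Lambda_{\alpha+\beta}]$ are lower order and should reduce to Theorem \ref{thCS} with admissible cut-offs. But the central step of your plan --- distributing a \emph{fractional} derivative $D^\alpha_\xi$ across the paraboloid restriction $\xi\mapsto\calF(\chi_t\phi_t)(-|\xi|^2,\xi)$ via a fractional Leibniz/chain rule --- is the part you yourself flag as ``the delicate point,'' and it is a genuine gap, not a technicality. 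There is no fractional chain rule: for integer order one gets the two terms you write ($\partial_{\xi_k}$ on the $\xi$-slot giving $x_k\chi_t\phi_t$, $\partial_\tau$ times $-2\xi_k$ giving $t\chi_t\phi_t$), but $D^\alpha_\xi$ is nonlocal and does not act on the composition $(\tau,\xi)\circ(\xi\mapsto(-|\xi|^2,\xi))$ by any finite sum of such pieces. ``Differentiate before restricting'' does not resolve this, because $D^\alpha_\xi$ applied to the restricted function is simply not expressible through $D^\alpha_{\tau,\xi}$ of the unrestricted one. So the reduction to ``Theorem \ref{thCS} with cut-offs $\chi$, $|x|^\alpha\chi$, $t\chi$'' is not actually reached.

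The paper closes exactly this gap with a different tool. Write $I = I_0+I_1+I_2$ where $I_0 = \Lambda_{\alpha+\beta}|x|^\alpha\int dt\,e^{-it\Delta}u_t$, $I_1$ drops the weight, and $I_2$ carries the commutator $[|x|^\alpha+1,\Lambda_{\alpha+\beta}]$. The key ingredient for $I_0$ (and the one your proposal is missing) is Ponce's exact commutation identity
$$|x|^\alpha e^{-it\Delta}\varphi \;=\; e^{-it\Delta}A^\alpha_t\varphi, \qquad A^\alpha_t\varphi := 2^\alpha t^\alpha\,e^{i|x|^2/(4t)}D^\alpha\!\big(e^{-i|x|^2/(4t)}\varphi\big),$$
which moves the weight through the propagator \emph{exactly} and converts it into a physical-side $D^\alpha$ twisted by the Fresnel phase $e^{\pm i|x|^2/(4t)}$. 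The leading term (drop the phase twist) is $\Lambda_{\alpha+\beta}D^\alpha$ applied to $t^\alpha u_t$, which is order $\leq 2\alpha+\beta$, giving precisely the stated constraint via Theorem \ref{thCS}; the phase-twist remainder is estimated using the nonlocal integral representation $D^\alpha\varphi(x)=c\int(\varphi(x)-\varphi(y))|x-y|^{-d-\alpha}\,dy$, the elementary bound $|1-e^{i(|x|^2-|y|^2)/(4t)}|\lesssim t^{-a}|x+y|^a|x-y|^a$, and Hardy--Littlewood--Sobolev, with the relaxed weight $(1+|x|)^{-1}$ of Remark \ref{remchi} to absorb a growing factor. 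Finally, the commutator $[|x|^\alpha,\Lambda_{\alpha+\beta}]$ is handled by a separate lemma (Lemma \ref{com}): it maps $H^{\beta+\eps}\to L^2$, i.e. it gains back the $\alpha$ relative to the naive count; this uses $\calF(|x|^\alpha)(\xi)=C|\xi|^{-d-\alpha}$, the H\"older continuity of $\xi\mapsto(1+|\xi|^2)^{(\alpha+\beta)/2}$, and again a Riesz-potential bound. None of these three concrete ingredients --- Ponce's identity, the integral representation of $D^\alpha$ with the Fresnel-phase estimate, and the commutator lemma --- appears in your proposal, and without the first one in particular the proof cannot get off the ground.
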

Above, the bound $\alpha \leq 1/4$ is arbitrary, the value $1/4$ is chosen since the largest value of the r.h.s. of \fref{conslem} is $1/2$. In order to see the link with Theorem \ref{thCS}, note that
$$
\calF(\chi_t \varphi_t)(-|\xi|^2,\xi) = \calF_x \left(\int_{\Rm } dt e^{-i t \Delta} \left(\chi_t \phi_t\right) \right)(\xi).
$$
The result of Lemma \ref{pr1} is fairly intuitive: for the sake of simplicity, replace $|x|^\alpha$ by $x_1$ (the first coordinate of $x$); then, up to a commutator between $x_1$ and $\Lambda_{\alpha+\beta}$, we use the classical fact that
$$
\Lambda_{\alpha+\beta} \, x_1\, e^{-it\Delta} = \Lambda_{\alpha+\beta}\, e^{-it\Delta} \, x_1+ 2 i t\Lambda_{\alpha+\beta}\, \partial_{x_1} \, e^{-it\Delta}.
$$
The weights $x_1$ and $t $ in both terms of the r.h.s are handled by the fact that $\chi_t$ has a compact support, and the second term exhibits a loss of a derivative of order one (of order $\alpha$ in the Lemma, which explains \fref{conslem}). Of course, what makes Lemma \ref{pr1} non trivial is the fact that $\alpha$ is not an integer, which necessitates then the use of fractional derivatives for which there are no simple product and chain rules. In particular, the fractional derivatives are not local, which makes the use of Theorem \ref{thCS} not direct. We conclude this section by the following remark. 

\begin{remark} \label{dual2} The dual version of Lemma \ref{pr1} reads, under the same assumptions on the parameters,
$$
\left(\int_{\Rm^{d+1}} |\chi(t,x)|^r | \Lambda_{\alpha+\beta} e^{it \Delta}\left((|\cdot|^\alpha+1)\varphi\right)(x) |^r dtdx \right)^{1/r} \leq C_\chi \|\varphi\|_{L^{2}(\Rm^{d})}.
$$ 
\end{remark}
\section{Proof of Theorem \ref{th1}} \label{proofth}

The proof is an application of Theorem \ref{thCS} and Lemma \ref{pr1}. We focus on the term $\gamma_\beta$ and explain at the end of the proof the straighforward adaptation to $\sigma_\beta$. We remark first that we can limit ourselves to nonnegative operators $\varrho$ in $\calJ_p$, $p<\infty$. We can indeed decompose $\varrho$ as $\varrho=\varrho_+-\varrho_-$, $\varrho_{\pm} \geq 0$, $\varrho_+ \varrho_-=\varrho_- \varrho_+=0$, and  the linearity of the trace yields
$$
|n_{\gamma_\beta}|=|n_{\gamma_{+,\beta}}-n_{\gamma_{-,\beta}}| \leq n_{\gamma_{+,\beta}}+n_{\gamma_{-,\beta}}.
$$
It suffices then to apply the estimates that we will derive for  $n_{\gamma_{\pm,\beta}}$, together with
$$
\| \varrho_{\pm} \|_{\calJ_p} \leq \|\varrho\|_{\calJ_p}, \qquad p \in [1,\infty).
$$
Also, in order to justify some formal computations, we consider  ``smooth'' operators $\varrho$ satisfying the following conditions: $\varrho$ is finite rank and 
\be \label{condrho}
\int_\Rm dt\, \Tr \Big((1+|x|^k) (-\Delta)^n \,\varrho(t)\, (-\Delta)^n (1+|x|^k)\Big) <\infty, \quad \forall k,n \in \Nm.
\ee
The final estimates are then obtained by density since such operators are dense in $L^1(\Rm,\calJ_p)$, $p<\infty$: on the one hand finite rank operators are dense in $\calJ_p$, $p<\infty$, and on the other, we have, for any smooth cut-off $\chi_\eta$ with $\chi_\eta(x) \to 1$ pointwise as $\eta \to 0$,
$$
\chi_\eta  e^{\eta \Delta}\,  \varrho \, e^{\eta \Delta} \chi_\eta  \to \varrho  \qquad \textrm{in} \quad L^1(\Rm_+,\calJ_p) \qquad \textrm{as} \qquad \eta \to 0, 
$$
whenever $\varrho \in L^1(\Rm,\calJ_p)$, $p\in [1,\infty]$. The regularity of $\varrho$ is then propagated to the operator $\Gamma(t)=\int_0^t e^{i (t-s) \Delta} \varrho(s) e^{-i (t-s) \Delta} ds$, see e.g. \cite{cazenave}, section 2.5, and as a consequence $\Gamma$ verifies
$$
\sup_{t \in \Rm} \Tr \Big((1+|x|^k) (-\Delta)^n \,\Gamma(t)\, (-\Delta)^n (1+|x|^k)\Big) <\infty, \quad \forall k,n \in \Nm.
$$
In particular, if $(\mu_j,\phi_j)_{j\in \Nm}$ is the spectral decomposition of $\Gamma$, this shows that
$$
\sup_{t \in \Rm} \sum_{j \in \Nm} \mu_j(t) \left\|(1+|x|^k) (-\Delta)^n \,\phi_j(t)\right\|^2_{L^2(\Rm^d)} < \infty, \quad \forall k,n \in \Nm,
$$
and therefore $(\mu_j(t))^{1/2}\phi_j(t) \in \calS(\Rm^d)$ for all $t \in \Rm$.\\

The proof starts with the following lemma:
\begin{lemma} \label{expdens} Let $\beta \in [0,1/2]$. For any $\varphi\in C_0^\infty(\Rm^{d+1})$, the following relation holds:
$$
\int_{\Rm} \int_{\Rm^d}  dt dx\, n_{\gamma_\beta}(t,x) \varphi(t,x) =\int_\Rm ds \Tr \, \big(  e^{-i s \Delta} \varrho(s) e^{is \Delta} F_\beta(s) \big),
$$
where
$$
F_\beta(s)=\int_{|s|}^\infty dt \, \Lambda_{\beta}e^{-i t \Delta }f_t e^{i t \Delta } \Lambda_{\beta} \in C^0(\Rm,\calL(L^2(\Rm^d))), \qquad f_t=\chi_t \varphi_t \chi_t.
$$
\end{lemma}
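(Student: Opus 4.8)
The plan is to convert the pairing of $n_{\gamma_\beta}$ against $\varphi$ into a time integral of an operator trace, then insert the Duhamel representation of $\Gamma$, interchange the order of the two time integrations, and finally carry out the inner $t$-integration, which collapses to the lower limit $|s|$ and produces $F_\beta(s)$.

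\emph{Step 1: from the local density to a trace.} By the smoothing reductions made just before the lemma, we may assume $\varrho$ finite rank and satisfying \fref{condrho}, so that $\Gamma(t)$, hence $\gamma_\beta(t)=\chi_t\Lambda_\beta\Gamma(t)\Lambda_\beta\chi_t$, is trace class for every $t$ with $(\mu_j(t))^{1/2}\phi_j(t)\in\calS(\Rm^d)$; in particular $n_{\gamma_\beta}$ is the genuine local density of $\gamma_\beta(t)$ and the series in \fref{defn} converges absolutely. For $\varphi\in C_0^\infty(\Rm^{d+1})$, writing $f_t=\chi_t\varphi_t\chi_t$ and using that multiplication operators commute, the self-adjointness of $\Lambda_\beta$, and the cyclicity of the trace (all legitimate since $(\mu_j)^{1/2}\phi_j\in\calS$ and $\Lambda_\beta f_t\Lambda_\beta$ maps $\calS$ into $\calS$), one obtains
\bee
\int_{\Rm^d}n_{\gamma_\beta}(t,x)\varphi(t,x)\,dx
&=&\sum_{j\in\Nm}\mu_j(t)\,\big(f_t\Lambda_\beta\phi_j(t),\Lambda_\beta\phi_j(t)\big)\\
&=&\Tr\big(\Gamma(t)\,\Lambda_\beta f_t\Lambda_\beta\big).
\eee
Integrating in $t$ reduces the statement to evaluating $\int_\Rm dt\,\Tr(\Gamma(t)\Lambda_\beta f_t\Lambda_\beta)$.

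\emph{Step 2: Duhamel, Fubini, cyclicity, and the $t$-integral.} Substituting $\Gamma(t)=\int_0^t e^{i(t-s)\Delta}\varrho(s)e^{-i(t-s)\Delta}\,ds$ and using that $\varrho\in L^1(\Rm,\calJ_1)$ together with the fact that $t\mapsto f_t$ is supported in a fixed bounded interval in time, one may apply Fubini to exchange the $s$ and $t$ integrations. Since $\Lambda_\beta$ commutes with $e^{it\Delta}$, the cyclicity of the trace gives
\[
\Tr\big(e^{i(t-s)\Delta}\varrho(s)e^{-i(t-s)\Delta}\,\Lambda_\beta f_t\Lambda_\beta\big)=\Tr\big(e^{-is\Delta}\varrho(s)e^{is\Delta}\,\Lambda_\beta e^{-it\Delta}f_t e^{it\Delta}\Lambda_\beta\big).
\]
After the exchange, for fixed $s$ the variable $t$ ranges over the set of times for which $s$ lies between $0$ and $t$; because $\int_0^t$ is a signed integral and $f_t$ has compact support in $t$, a change of orientation in the $t$-variable when $s<0$ turns this set into $\{t>|s|\}$, so the $t$-integral yields precisely $F_\beta(s)=\int_{|s|}^\infty\Lambda_\beta e^{-it\Delta}f_t e^{it\Delta}\Lambda_\beta\,dt$. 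The regularity of $s\mapsto F_\beta(s)$ claimed in the statement follows from the continuity and compact support of $t\mapsto\Lambda_\beta e^{-it\Delta}f_t e^{it\Delta}\Lambda_\beta$, and this completes the identity.

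\emph{Main obstacle.} The difficulty is bookkeeping rather than conceptual. First, every trace manipulation in Step 1 involves the unbounded operator $\Lambda_\beta$, and this is exactly what the reduction to $\varrho$ finite rank satisfying \fref{condrho} is for: it guarantees $(\mu_j)^{1/2}\phi_j\in\calS$, so that all operators appearing are trace class with rapidly decaying kernels and the interchange of summation and integration is justified; the final identity for general $\varrho\in L^1(\Rm,\calJ_p)$ is then recovered by the density argument recorded before the lemma. Second, one must carefully track the region of integration in the $(t,s)$-plane, and the orientation of $\int_0^t$, when passing from the double integral to $\int_\Rm ds$ — this is what produces the lower limit $|s|$ in $F_\beta$ — and the compact support of $\chi$ in time is what keeps the resulting $t$-integral convergent and justifies the use of Fubini.
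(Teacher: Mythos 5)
Your reduction to the identity via Duhamel, Fubini, cyclicity, and the collapse of the $(t,s)$-region to $t>|s|$ follows the same high-level plan as the paper, and your use of the ``smooth'' finite-rank reduction of $\varrho$ in place of the paper's explicit $\Lambda^\eta=(\II-\eta\Delta)^{-1}$ regularization is a legitimate variant for handling the unbounded $\Lambda_\beta$ in the trace manipulations. However, there is a genuine gap in the last step: you assert that $F_\beta(s)\in\calL(L^2(\Rm^d))$ and the $C^0$-regularity in $s$ ``follow from the continuity and compact support of $t\mapsto\Lambda_\beta e^{-it\Delta}f_t e^{it\Delta}\Lambda_\beta$.'' This is not correct, because for fixed $t$ the integrand is \emph{not} a bounded operator on $L^2(\Rm^d)$: $\Lambda_\beta e^{-it\Delta}f_t e^{it\Delta}\Lambda_\beta = e^{-it\Delta}\Lambda_\beta f_t\Lambda_\beta e^{it\Delta}$, and $\Lambda_\beta f_t\Lambda_\beta$ (multiplication by a compactly supported cutoff sandwiched between two copies of $(\II-\Delta)^{\beta/2}$) is unbounded on $L^2$ for any $\beta>0$. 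The boundedness of $F_\beta(s)$ is only recovered \emph{after} integrating in $t$, and it is precisely a manifestation of the local smoothing effect: the paper establishes it by writing, for $v\in C^\infty_0(\Rm^d)$,
$$
\|F_\beta(s)v\|_{L^2}=\Bigl\|\Lambda_\beta\int_\Rm dt\,e^{-it\Delta}\chi_t g_t\Bigr\|_{L^2},\qquad g_t=\un_{(|s|,\infty)}\varphi_t\chi_t\Lambda_\beta e^{it\Delta}v,
$$
and then applying Theorem \ref{thCS} to the outer $\Lambda_\beta\int e^{-it\Delta}\chi_t(\cdot)$ and the dual estimate of Remark \ref{remdual} to the inner $\chi_t\Lambda_\beta e^{it\Delta}v$. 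Without this, the right-hand side of the lemma's identity is not shown to be well defined as stated, and you also lose the uniform control needed to pass from smooth $\varrho$ to general $\varrho\in L^1(\Rm,\calJ_p)$ in the subsequent proof of Theorem \ref{th1}. You should replace the appeal to ``continuity and compact support'' by an explicit application of Theorem \ref{thCS} together with its dual form.
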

\begin{proof}
We begin with the following equalities, supposing that the support in $t$ of $\chi_t$ is included  on $[-T,T]$:
\bee
\int_{\Rm} \int_{\Rm^d}  dt dx \, n_{\gamma_\beta(t)}(t,x) \varphi(t,x) &=& \int_{-T}^T dt \, \Tr \big( \gamma_\beta(t) \varphi_t \big)\\
&=&\int_{-T}^T dt \,\Tr \left( \chi_t \Lambda_{\beta} \int_0^t ds e^{i(t- s) \Delta} \varrho(s) e^{-i (t-s) \Delta }\Lambda_{\beta} \chi_t \, \varphi_t ds \right)\\
&=&\int_{-T}^T \int_0^t ds dt \, \Tr \Big( \chi_t \Lambda_{\beta}e^{i(t- s) \Delta} \varrho(s)e^{-i (t-s) \Delta }\Lambda_{\beta}\chi_t \, \varphi_t  \Big)\\[2mm]
&:=&I.
\eee
The last equality is justified by 
\begin{align} \label{justi}
&\int_0^t ds \,\left|\Tr \left( \chi_t \Lambda_{\beta}e^{i(t- s) \Delta} \varrho(s)e^{-i (t-s) \Delta }\Lambda_{\beta}\chi_t \, \varphi_t  \right)\right| \\ \nonumber
&\hspace{2cm}=\int_0^t ds \,\left|\Tr \left( \chi_t e^{i(t- s) \Delta} \Lambda_{\beta} \varrho(s)\Lambda_{\beta} e^{-i (t-s) \Delta }\chi_t \, \varphi_t  \right)\right| \\ \nonumber
&\hspace{2cm} \leq \|\chi_t\|^2_{L^\infty(\Rm^{d+1})} \|\varphi_t \|_{L^\infty(\Rm^{d+1})} \int_\Rm ds\, \| \Lambda_{\beta} \varrho(s) \Lambda_{\beta} \|_{\calJ_1}  < \infty.
\end{align}
Above, we used the fact that $\Lambda_\beta$ and $e^{it \Delta}$ commute. Since the operator $\Lambda_\beta$ is not bounded, we need an additional step to justify the use of the cyclicity of the trace and to obtain that 
$$
I=\int_{-T}^T ds\,\Tr \left( e^{-i s \Delta} \varrho(s)e^{i s \Delta }  \Lambda_{\beta} \int_{|s|}^\infty dt  e^{-i t \Delta} f_t e^{i t \Delta} \Lambda_{\beta} \right),
$$
which would end the proof. We therefore regularize $I$ as
$$
I_\eta=\int_{-T}^T \int_0^tds dt\, \Tr \left( \chi_t \Lambda^\eta \Lambda_{\beta} e^{i(t- s) \Delta} \varrho(s)e^{-i (t-s) \Delta }\Lambda_{\beta} \Lambda^\eta \chi_t \, \varphi_t  \right),
$$
where $\Lambda^\eta=(\II-\eta \Delta)^{-1}$. Similar estimates as in \fref{justi} show that $I_\eta \to I$ as $\eta \to 0$. Using the semigroup property of $e^{it \Delta}$, and the fact that $\Lambda^\eta \Lambda_\beta$ is bounded, we can write
\bee
I_\eta &=&\int_{-T}^T  \int_0^t ds dt\, \Tr \left(e^{-i s \Delta} \varrho(s) e^{is \Delta } e^{-i t \Delta}\Lambda_{\beta} \Lambda^\eta f_t \Lambda^\eta \Lambda_{\beta} e^{i t \Delta}\right) \\
&=&\int_{-T}^T ds  \int_{|s|}^\infty  dt \,\Tr \left(e^{-i s \Delta} \varrho(s) e^{is \Delta } \Lambda_{\beta} \Lambda^\eta e^{-i t \Delta} f_t e^{i t \Delta}  \Lambda^\eta \Lambda_{\beta} \right)\\
&=&\int_{-T}^T ds\, \Tr \big(  e^{-i s \Delta} \varrho(s) e^{is \Delta} F^\eta_\beta(s) \big),
\eee
where, for all $s\in \Rm$, 
$$
F^\eta_\beta(s):=\int_{|s|}^\infty dt  \Lambda_{\beta}\Lambda^\eta e^{-i t \Delta }f_t e^{i t \Delta } \Lambda^\eta \Lambda_{\beta}\in \calL(L^2(\Rm^d)).
$$
Above, the use of the Fubini theorem and the inversion of the integral and the trace are justified by similar estimates as \fref{justi}. We now show that $F^\eta_\beta \to F_\beta$ in $C^0([-T,T],\calL(L^2(\Rm^d)))$ as $\eta \to 0$. We remark first that $F_\beta(s) \in \calL(L^2(\Rm^d))$ for all $s \in [-T,T]$ and $\beta \in [0,1/2]$ thanks to Theorem \ref{thCS} and Remark \ref{remdual}. Indeed, for any $v \in C_0^\infty(\Rm^d)$,
\bee
\| F_\beta(s) v \|_{L^2(\Rm^d)}&=&\left\| \Lambda_{\beta} \int_\Rm dt  e^{-i t \Delta } \chi_t g_t \right\|_{L^2(\Rm^d)}, \qquad g_t:=\un_{(|s|,\infty)} \varphi_t \chi_t  \Lambda_{\beta}  e^{i t \Delta } v,\\
&\leq & C \|g_t \|_{L^2(\Rm^{d+1})}\\
& \leq & C  \|\varphi_t\|_{L^\infty(\Rm^{d+1})} \|\chi_t  \Lambda_{\beta}  e^{i t \Delta } v\|_{L^2(\Rm^{d+1})}\\
&\leq & C \|v\|_{L^2(\Rm^d)}.
\eee
The continuity of $F_\beta(s)$ in  $s$ is straightforward. We then write
$$
F_\beta^\eta-F_\beta=(\Lambda^\eta-\II) F_\beta  \Lambda^\eta+F_\beta  (\Lambda^\eta-\II),
$$
and conclude with the fact that $(\Lambda^\eta-\II)\to 0$ in $\calL(L^2(\Rm^{d}))$ together with $F_\beta \in C^0([-T,T],\calL(L^2(\Rm^d)))$. This ends the proof of the lemma since $F_\beta(s)=0$ for $|s| \geq T$.
\end{proof}

\paragraph{The case $\varrho$ trace class.} From Lemma \ref{expdens}, we have
\bee
\left|\int_{\Rm} \int_{\Rm^d} n_{\gamma_\beta}(t,x) \varphi(t,x) dt dx \right| &\leq& \sup_{s \in \Rm} \| F_\beta(s) \| \int_\Rm ds \, \Tr \big(  e^{i s \Delta} \varrho(s) e^{- is \Delta} \big)\\
&=&\sup_{s \in \Rm} \| F_\beta(s) \| \|\varrho\|_{L^1(\Rm,\calJ_1)},
\eee
and it remains to estimate $F_\beta$ in terms of $\varphi$. As in the proof of Lemma \ref{expdens}, we write
\bee
\| F_\beta(s) v \|_{L^2(\Rm^d)}&=&\left\| \Lambda_{\beta} \int_\Rm dt  e^{-i t \Delta } \chi_t g_t \right\|_{L^2(\Rm^d)}, \qquad g_t=\un_{(|s|,\infty)} \varphi_t \chi_t  \Lambda_{\beta}  e^{i t \Delta } v,\\
&\leq & C \|g_t \|_{L^{r'}(\Rm^{d+1})}\\
& \leq & C  \|\varphi_t\|_{L^{r'q'}(\Rm^{d+1})} \|\chi_t  \Lambda_{\beta}  e^{i t \Delta } v\|_{L^{r'q}(\Rm^{d+1})}\\
&\leq & C \|\varphi_t\|_{L^{r'q'}(\Rm^{d+1})} \|v\|_{L^2(\Rm^d)}.
\eee
The first inequality above is an application of Theorem \ref{thCS} with parameters satisfying
\be \label{eqq1}
\left\{\begin{array}{l}
\ds \beta < \frac{1+d}{r}-\frac{d}{2} \qquad \textrm{if} \qquad 1 \leq r' <2, \qquad \\
\ds \beta\leq \frac{1}{2} \qquad \textrm{if} \qquad r'=2.
\end{array} \right.
\ee
The second inequality follows from the H\"older inequality with $1\leq q \leq \infty$, and the last inequality is a consequence of the dual version of Theorem \ref{thCS} given in Remark \ref{remdual}, provided we have the relations
\be \label{eqq2}
\left\{\begin{array}{l}
\ds \beta < \frac{1+d}{r' q}-\frac{d}{2} \qquad \textrm{if} \qquad 2< r'q <\infty, \qquad \\
\ds \beta\leq \frac{1}{2} \qquad \textrm{if} \qquad r'q=2.
\end{array} \right.
\ee
We optimize the right-hand-side of the inequalities of \fref{eqq1}-\fref{eqq2} by setting $q=1$ when $r'=2$, and when $r'<2$, we set $r'q=r$, i.e. $q=r-1$. In this case, $r'q'=r/(r-2)$ and $r'q'/(r'q'-1)=r/2$, which yields by duality the estimate
$$
\|n_{\gamma_\beta}\|_{L^{r/2}(\Rm^{d+1})} \leq C \|\varrho\|_{L^1(\Rm,\calJ_1)}, \qquad \textrm{with} \qquad \left\{\begin{array}{l}
\ds \beta < \frac{1+d}{r}-\frac{d}{2} \qquad \textrm{if} \qquad 1\leq r'<2, \qquad \\
\ds \beta\leq \frac{1}{2} \qquad \textrm{if} \qquad r'=2.
\end{array} \right.
$$
Note that for $\beta$ to be positive, we need the condition $r < 2(d+1)/d$.

\paragraph{The case $\varrho \in \calJ_p$, $p>1$.} This case is more interesting since new estimates are required. We start as before with Lemma \ref{expdens}. Since $\varrho$ is not trace class, one may expect a lower value for the exponent $\beta$. A simple way to proceed is to compensate for the fact that $\varrho \notin \calJ_1$ by introducing the operator $J_\alpha:=\Lambda_\alpha (1+|x|^\alpha)$ for an appropriate $\alpha$ in 
\bee
\Tr \, \big(  e^{-i s \Delta} \varrho(s) e^{is \Delta} F_\beta(s) \big)&=&\Tr \, \left( J_\alpha J_\alpha^{-1} e^{-i s \Delta} \varrho(s) e^{is \Delta} (J_\alpha^*)^{-1}J_\alpha^*  F_\beta(s) \right):=II(s).
\eee
As in the proof of Lemma \ref{expdens}, we cannot use directly the cyclicity of the trace since $J_\alpha$ is not bounded. We therefore introduce a regularization as before, with the difference that we now use Lemma \ref{pr1} instead of Theorem \ref{thCS} to pass to the limit. This justifies the fact that
\bea \nonumber
|II(s)|&=&\left|\Tr \, \left( J_\alpha^{-1} e^{-i s \Delta} \varrho(s) e^{is \Delta} (J_\alpha^*)^{-1}J_\alpha^*  F_\beta(s) J_\alpha \right) \right|\\
&\leq & \Tr \, \left( J_\alpha^{-1} e^{-i s \Delta} \varrho(s) e^{is \Delta} (J_\alpha^*)^{-1} \right) \| J_\alpha^*  F_\beta(s) J_\alpha \|. \label{T2}
\eea
Using the H\"older inequality for $\calJ_p$ spaces, we find
$$
\Tr \, \left( J_\alpha^{-1} e^{-i s \Delta} \varrho(s) e^{is \Delta} (J_\alpha^*)^{-1} \right) \leq \|\varrho(s)\|_{\calJ_p} \| J_\alpha^{-1} \|^2_{\calJ_{2p'}} = \|\varrho(s)\|_{\calJ_p} \| (1+|x|^\alpha)^{-1} \Lambda_{-\alpha}\|^2_{\calJ_{2p'}}.
$$
Above, we used the facts that $(J_\alpha^*)^{-1}=(J_\alpha^{-1})^{*}$ and that the $\calJ_p$ norms of $A$ and $A^*$ are equal. The Kato-Seiler-Simon inequality \cite{Simon-trace}, Chapter 4, then yields
$$
\| (1+|x|^\alpha)^{-1} \Lambda_{-\alpha}\|_{\calJ_{2p'}} \leq C \|(1+|x|^\alpha)^{-1}\|_{L^{2p'}(\Rm^d)} \|(1+|x|^2)^{-\alpha/2}\|_{L^{2p'}(\Rm^d)}, 
$$
which is finite whenever $2\alpha p'>d$. It remains to treat the term in \fref{T2} involving $F_\beta$. It is treated in the same fashion as in case $p=1$, with the difference  that we use Lemma \ref{pr1} and its dual version instead of Theorem \ref{thCS}. Hence, with
$$
g_t=\un_{(|s|,\infty)} \varphi_t \chi_t  \Lambda_{\alpha+\beta}  e^{i t \Delta }  (1+|x|^\alpha) v,
$$
we find the estimate
\bee
\| J_\alpha^* F_\beta(s) J_\alpha v \|_{L^2(\Rm^d)}&=&\left\| (1+|x|^\alpha)\Lambda_{\alpha+\beta} \int_\Rm dt  e^{-i t \Delta } \chi_t g_t \right\|_{L^2(\Rm^d)}\\
&\leq & C \|\varphi_t\|_{L^{r'q'}(\Rm^{d+1})} \|v\|_{L^2(\Rm^d)},
\eee
under the conditions
$$
2\alpha+\beta < \frac{1+d}{r}-\frac{d}{2}, \qquad \textrm{for} \qquad1 < r' < 2, \qquad \textrm{and} \qquad 2\alpha+\beta \leq \frac{1}{2}, \qquad \textrm{for} \qquad r'=2.
$$
Owing to $2\alpha p'>d$, the latter condition becomes the one of Theorem \ref{th1}. Going back to Lemma \ref{expdens}, this finally yields by duality
$$
\|n_{\gamma_\beta}\|_{L^{r/2}(\Rm^{d+1})} \leq C \|\varrho\|_{L^1(\Rm,\calJ_p)}, \qquad 1 < r' \leq 2, \qquad \beta < \frac{1+d}{r}-\frac{d}{2}-\frac{d}{p'}.
$$
Note that for $\beta$ to  be positive, we need the conditions
$$
r\leq 2+\frac{2}{d}, \qquad p'>2d, \qquad \frac{1}{p'}<\frac{d+1}{rd}-\frac{1}{2}.
$$
\begin{remark} It is interesting to explore another route to the above estimate on $n_{\gamma_\beta}$. Instead of introducing the operators $J_\alpha$, the term $II$ could be directly bounded by
$$
\|\varrho\|_{\calJ_p} \|F_\beta(s)\|_{\calJ_{p'}}, \qquad p>1.
$$
The main difficulty is now to estimate $F_\beta$ in the Schatten space $\calJ_{p'}$. This does not seem direct, in particular it is unclear whether the method of \cite{LewStri}, which is used to derive Strichartz estimates and consists in writing $\Tr((F_\beta)^{p'})$ (when $p'$ is an integer) as a multiple integral in time and in switching trace and integration, can be employed or not since the order of integration matters for estimating $F_\beta$.
\end{remark}
\paragraph{Estimates \fref{esttraceth} and conclusion.} We address now estimates \fref{esttraceth}, which are direct according to what was previously done: adapting Lemma \ref{expdens} yields
$$
\|\gamma_\beta\|_{L^1(\Rm,\calJ_1)} = \int_\Rm dt\, \Tr \big( \gamma_\beta(t)\big)= \int_\Rm dt\, \Tr \big( \varrho(t) F_\beta(t) \big),
$$
where $F_\beta$ is the same as in Lemma \ref{expdens} with now $\varphi_t\equiv 1$. Introducing the operator $J_\alpha$ as before, we find
$$
\|\gamma_\beta\|_{L^1(\Rm_+,\calJ_1)} \leq C \sup_{t\in \Rm} \| J_\alpha^* F_\beta(t) J_\alpha\| \int_\Rm dt\, \| \varrho(t)\|_{\calJ_p}, 
$$
and we already know that the term involving $F_\beta$ is finite provided $\beta$ satisfies the conditions of Theorem \ref{th1} with $r=2$. A similar method yields the estimate on $\sigma_\beta$.

This concludes the proof of the estimates involving $\gamma_\beta$. The estimates on  $n_{\sigma_\beta}$ are derived with the relation
$$
\int_{\Rm} \int_{\Rm^d} n_{\sigma_\beta}(t,x) \varphi(t,x) dt dx = \Tr \big( u_0 F_\beta(0) \big),
$$
and estimating $F_\beta$ as before. This ends the proof of Theorem \ref{th1}.
\section{Proof of Lemma  \ref{pr1}} \label{prooflem}
Let $u_t:=\chi_t  \phi_t \in C_0^\infty(\Rm^{d+1})$ and define
$$
I:=(|x|^\alpha+1) \Lambda_{\alpha+\beta} \int_{\Rm } dt e^{-i t \Delta} u_t.
$$
We then write, for $[A,B]=AB-BA$ the usual commutator between operators,
\bee
I&=&\Lambda_{\alpha+\beta} (|x|^\alpha+1) \int_{\Rm } dt e^{-i t \Delta} u_t+ \big[|x|^\alpha+1,\Lambda_{\alpha+\beta}\big]  \int_{\Rm } dt e^{-i t \Delta} u_t\\
&:=&I_0+I_1+I_2,
\eee
where
$$
I_0=\Lambda_{\alpha+\beta} |x|^\alpha \int_{\Rm } dt e^{-i t \Delta} u_t, \qquad I_1=\Lambda_{\alpha+\beta} \int_{\Rm } dt e^{-i t \Delta} u_t,
$$
and $I_2$ is the term involving the commutator. We start with $I_0$, the most singular term. 
\paragraph{The term $I_0$.}
For $\varphi$ smooth and $\alpha>0$, let 
$$
A^\alpha_t \varphi(x):=2^\alpha t^\alpha  e^{i\frac{|x|^2}{4t}} D^\alpha \left( e^{-i\frac{|x|^2}{4t}} \varphi(x)\right).
$$
Then, according to \cite{ponce}, Proposition 3, we have the following commutation relation between $|x|^\alpha$ and the operator $e^{-it \Delta}$:
$$
|x|^\alpha e^{-i t \Delta} \varphi= e^{-i t \Delta} A^\alpha_t \varphi.
$$
This leads to the decomposition of $I_0$ below:
\bee
I_0&=&2^\alpha \Lambda_{\alpha+\beta}\int_{\Rm } dt e^{-i t \Delta}\left[  t^\alpha  e^{i\frac{|x|^2}{4t}} D^\alpha \left( e^{-i\frac{|x|^2}{4t}} u_t \right) \right]\\
&:=&J_1+J_2,
\eee
with
\begin{align*}
&J_1=2^\alpha \Lambda_{\alpha+\beta}\int_{\Rm } dt e^{-i t \Delta}D^\alpha \left( t^\alpha u_t \right)\\
&J_2=2^\alpha \Lambda_{\alpha+\beta}\int_{\Rm } dt \,t^\alpha   e^{-i t \Delta}\left[  e^{i\frac{|x|^2}{4t}} D^\alpha \left( e^{-i\frac{|x|^2}{4t}} u_t \right)-D^\alpha \left( u_t \right) \right].
\end{align*}
The term $J_1$ is directly estimated with Theorem \ref{thCS}: since $D^\alpha$ and $e^{-it \Delta}$ commute, we have
$$
\| J_1\|_{L^2(\Rm^d)} \leq C \left\|  \Lambda_{2 \alpha+\beta}\int_{\Rm } dt  e^{-i t \Delta} \, t^\alpha u_t \right\|_{L^2(\Rm^d)},
$$
and therefore, applying Theorem \ref{thCS} with $q=2$,  and supposing that the support of $\chi_t$ in the $t$ variable is included in $[-T,T]$, the following estimate holds
\be \label{estJ1}
\| J_1\|_{L^2(\Rm^d)} \leq C \|t^\alpha \phi_t\|_{L^{r'}((-T,T)\times \Rm^{d})} \leq C \|\phi_t\|_{L^{r'}(\Rm^{d+1})},
\ee
with
\be \label{mainconst}
\left\{
\begin{array}{l}
\ds 2\alpha+\beta < \frac{(d+1)}{r}-\frac{d}{2}, \qquad r'\in [1,2).\\
\ds 2\alpha+\beta \leq \frac{1}{2}, \qquad r'=2.
\end{array}
\right.
\ee
This treats the leading term and yields the condition on $\alpha$ and $\beta$ stated in the lemma. We turn now to the term $J_2$. For $\varphi \in C^1_c(\Rm^{d})$, we use the following representation formula for the fractional Laplacian (see e.g. \cite{stein1}): we have, for any $\alpha \in (0,1)$,
\be \label{locfrac}
 D^\alpha(\varphi)(x)=c_{d,\alpha} \int_{\Rm^d} \frac{\varphi(x)-\varphi(y)}{|x-y|^{d+\alpha}} dy.
\ee
Note that there is no need to introduce a principal value in the definition since $\alpha \in (0,1)$ and $\varphi \in C^1$. We then write
\bee
e^{i\frac{|x|^2}{4t}} D^\alpha \left(  e^{-i\frac{|x|^2}{4t}} t^\alpha u_t  \right)-D^\alpha \left( t^\alpha u_t \right)&=&c_{d,\alpha} \int_{\Rm^d} \frac{t^\alpha u_t(x)- e^{i\frac{|x|^2-|y|^2}{4t}} t^\alpha u_t(y)}{|x-y|^{d+\alpha}} dy-D^\alpha \left( t^\alpha u_t \right)\\
&=&c_{d,\alpha} \int_{\Rm^d} \frac{(1- e^{i\frac{|x|^2-|y|^2}{4t}}) t^\alpha u_t(y)}{|x-y|^{d+\alpha}} dy\\[3mm]
&:=&R_{t,0}.
\eee
We need now to estimate a term of the form
$$
\Lambda_{\alpha+\beta}\int_{\Rm } dt   e^{-i t \Delta} R_{t,0}.
$$
For this, we cannot directly apply Theorem \ref{thCS} since the function $R_{t,0}$ does not have the required regularity and compact support. This is therefore where we invoke Remark \ref{remchi} and the fact that the function $\chi(t,x)=(1+|x|)^{-s}$ for $s>1/2$ is sufficient. We actually choose $(1+|x|)^{-1}$ for simplicity. Then,
\bee
(1+|x|) R_{t,0}
&=&c_{d,\alpha} \int_{\Rm^d} (1+|y|)\frac{(1- e^{i\frac{|x|^2-|y|^2}{4t}}) t^\alpha u_t(y)}{|x-y|^{d+\alpha}} dy\\
&&+c_{d,\alpha} \int_{\Rm^d} (|x|-|y|)\frac{(1- e^{i\frac{|x|^2-|y|^2}{4t}}) t^\alpha u_t(y)}{|x-y|^{d+\alpha}} dy\\
&:=&R_{t,1}+R_{t,2}.
\eee
This yields
$$
J_2=2^\alpha \Lambda_{\alpha+\beta}\int_{\Rm } dt e^{-i t \Delta}(1+|x|)^{-1}\left[R_{t,1}+R_{t,2}\right],
$$
and as a consequence of Theorem \ref{thCS},
\be \label{estJ2}
\| J_2\|_{L^2(\Rm^d)} \leq C \|R_{t,1}+R_{t,2}\|_{L^{q'}(\Rm^{d+1})},\; \textrm{with} \; \left\{
\begin{array}{l}
\ds \alpha+\beta < \frac{(d+1)}{q}-\frac{d}{2}, \quad q'\in [1,2).\\
\ds \alpha+\beta \leq \frac{1}{2}, \quad q'=2.
\end{array}
\right.
\ee
We estimate now $R_{t,1}+R_{t,2}$. We will actually only treat the most singular term $R_{t,1}$ since $R_{t,2}$ follows from the same techniques. We will use for this the simple inequality below, for $a \in [0,1]$, 
\be \label{AF}
\left| 1- e^{i\frac{|x|^2-|y|^2}{4t}}\right| \leq 2^{1-3a}t^{-a} |x+y|^a|x-y|^a\leq 2^{1-3a}t^{-a}\left( 2^a|y|^a|x-y|^{a} +|x-y|^{2a}\right).
\ee
Using \fref{AF} in $R_{t,1}$ with $a=\alpha+\eps$, for some $\eps>0$ such that $\eps \leq 1-\alpha$, we find 
\bee
|R_{t,1}| &\leq& C t^{-\eps} \int_{\Rm^d} (1+|y|^{1+\alpha+\eps})\frac{|u_t(y)|}{|x-y|^{d-\eps}} dy+C t^{-\eps} \int_{\Rm^d} (1+|y|)\frac{|u_t(y)|}{|x-y|^{d-\alpha-2\eps}} dy\\
&:=&R_{t,3}+R_{t,4}.
\eee
We start with the most singular term $R_{t,3}$. For any $r'\in (1,2]$ as in \fref{estJ1}, we claim that we can find $q'$ (the one in \fref{estJ2}) and $\eps>0$ such that
\be \label{R3}
\|R_{t,3}\|_{L^{q'}(\Rm^{d+1})} \leq C \|\phi_t\|_{L^{r'}(\Rm^{d+1})}.
\ee
We will use for this the following classical estimate on Riesz potentials, see e.g. the Hardy-Littlewood-Sobolev inequality of \cite{RS-80-2}, section IX.4,
\be \label{riesz}
\left\| \int_{\Rm^d} \frac{\varphi(y)}{|\cdot-y|^{d-\eps}} dy \right\|_{L^{\frac{s d}{d-\eps s}}(\Rm^d)} \leq C \|\varphi\|_{L^s(\Rm^d)}, \qquad 1<s<\infty, \qquad 1<\frac{s d}{d-\eps s}<\infty.
\ee
Fix now $r'\in (1,2]$ as in \fref{estJ1}, and set  $q'=\frac{s d}{d-\eps s}$ in \fref{estJ2} for some $s$ such that $1<s<r'$. Then, for \fref{estJ2} (in the case $q'<2$) and \fref{riesz} to hold, it is required that
\be \label{const}
1<\frac{s d}{d-\eps s} < 2, \qquad \alpha+\beta < \left(\frac{1}{s'}+\frac{\eps}{d}\right)(d+1)-\frac{d}{2}, \qquad r<s'<\infty.
\ee
Using \fref{riesz} and the H\"older inequality, it follows that
\bea \nonumber
\|R_{t,3}\|_{L^{q'}(\Rm^{d+1})} &\leq& C\| t^{-\eps} (1+|x|^{1+\alpha+\eps}) u_t \|_{L^{q'}(\Rm_t,L^{s}(\Rm_x^{d}))} \\\nonumber
&\leq& C\| t^{-\eps} u_t \|_{L^{q'}(\Rm_t,L^{r'}(\Rm_x^{d}))}\\ 
&\leq& C\left(\int_{-T}^T t^{-\eps q' p' } \right)^{1/(q'p')} \|\phi_t\|_{L^{r'}(\Rm^{d+1})}, \label{intT}
\eea
where $pq'=r'$, $p\in (1,\infty)$. In the second line above, we used the fact that $u_t$ has a compact support in order to control the $L^s_x$ norm of $u_t$ by the $L^{r'}_x$ norm, which is possible since $s<r'$. With $pq'=r'$ and $q'=\frac{s d}{d-\eps s}$, we find therefore
$$
p'=r' \left(\frac{d-\eps s }{r'(d-\eps s)-sd}\right)
$$
with the constraints
\be \label{eqp}
p=\frac{r'}{q'}=\frac{(d-\eps s)r'}{sd} > 1, \qquad i.e. \qquad 1<s<\frac{dr'}{d+\eps r'}.
\ee
For the first term of \fref{intT} to be finite, we need $\eps q'p'<1$, that is 
\be \label{est4}
\eps q' p'=\frac{\eps r'sd}{r'(d-\eps s)-sd}<1, \qquad i.e. \qquad sr'\eps (d+1)+sd<dr'.
\ee
When the latter inequality is satisfied, it implies the second inequality in \fref{eqp} that we thus ignore. We collect now the various constraints on the parameters and define $\eps$ and $s$ appropriately. The first inequality in \fref{const} becomes
$$
\frac{1}{2} < \frac{1}{s}-\frac{\eps}{d}<1.
$$
The second inequality holds since $s>1$ and $\eps>0$, and we therefore only keep the first one. 
Defining $\delta$ by $r'=s+\delta$, with $0<\delta<r'-1$ since $r'>s>1$, \fref{est4} and the above inequality become
\be \label{const5}
r' (r'-\delta) \eps (d+1)<\delta d, \qquad \frac{1}{2} < \frac{1}{r'-\delta}-\frac{\eps}{d}.
\ee
We have in addition $r'\in (1,2]$ as well as
\be \label{const3} 
0<\eps\leq 1-\alpha, \qquad \alpha+\beta < \left(\frac{1}{s'}+\frac{\eps}{d}\right)(d+1)-\frac{d}{2}.
\ee
Since $r'\leq 2$ and $\delta>0$, $r'-\delta \leq 2-\delta<2$, the inequalities in \fref{const5} are satisfied as soon as
$$
4 (d+1) \eps \leq \delta d, \qquad \textrm{and} \qquad \frac{1}{2} <\frac{1}{2-\delta}-\frac{\eps}{d}, \qquad \textrm{that is for the latter} \qquad \frac{4\eps}{d+2\eps} < \delta.
$$
These latter inequalities are satisfied for instance when 
$$
\eps=\frac{\delta d}{4(d+1)} < \frac{1}{4} \qquad (\textrm{since} \quad\delta <r'-1\leq 1),
$$
and we verify that $\alpha+\eps \leq 1$ since $\alpha \leq 1/4$. It remains to choose $\delta$. We need to do it in such a way that $0<\delta<r'-1$ and such that the second inequality of \fref{const3} holds when \fref{mainconst} is verified. The latter condition can be expressed as the inequality, for $r \in [2,\infty)$,
$$
\frac{(d+1)}{r}\leq \left(\frac{1}{s'}+\frac{\eps}{d}\right)(d+1)+\alpha.
$$
With direct algebra and our current choice of $\eps$, this becomes
\be \label{const22}
\frac{\delta}{r'(r'-\delta)} \leq \frac{\eps}{d}+\frac{\alpha}{d+1}=\frac{\delta }{4(d+1)}+\frac{\alpha}{d+1}.
\ee
Since $r'>1$ and $r'-\delta>1$, we have $1<r'(r'-\delta)$ and \fref{const22} is satisfied whenever
$$
\delta \leq \frac{\delta }{4(d+1)}+\frac{\alpha}{d+1}, \qquad \textrm{that is} \qquad \delta \leq \frac{\alpha (4d+1)}{(d+1)(4d+3)}.
$$
Setting finally
$$
\delta= \min \left( \frac{\alpha (4d+1)}{(d+1)(4d+3)},\frac{r'-1}{2} \right),
$$
we verify that all constraints are satisfied. This concludes the estimation of the term $R_{t,3}$ and yields \fref{R3}. Regarding $R_{t,4}$, we use once more the Hardy-Littlewood-Sobolev inequality to obtain, for the same $q',\eps$ and $s$ as above and $s_0=\frac{sd}{d+s(\alpha+\eps)}<s$,
$$
\|R_{t,4}\|_{L^{q'}(\Rm^{d+1})} \leq  C\| t^{-\eps} (1+|x|) u_t \|_{L^{q'}(\Rm_t,L^{s_0}(\Rm_x^{d}))} \leq C\| t^{-\eps} u_t \|_{L^{q'}(\Rm_t,L^{s}(\Rm_x^{d}))}
$$
since $u_t$ has a compact support in $x$. It then suffices to proceed as for $R_{t,3}$. Finally, as already mentioned, the term $R_{t,2}$ is more regular and is treated with a similar and simpler analysis. This then enables us to estimate $I_0$ by
$$
\|I_0\|_{L^2(\Rm^d)} \leq C \|\phi_t\|_{L^{r'}(\Rm^{d+1})},
$$
under the condition \fref{mainconst}. We turn now to the term $I_2$.

\paragraph{The term $I_2$.} We start with the following lemma.
\begin{lemma} \label{com} Let $\varphi \in \calS(\Rm^{d})$, and let $\alpha,\beta>0$ with $\alpha+\beta<1$. Then, for any $\eps>0$, we have the estimate
$$
\left\| \big[1+|x|^\alpha,\Lambda_{\alpha+\beta}\big] \varphi \right \|_{L^2(\Rm^{d})} \leq C \| \varphi \|_{H^{\beta+\eps}(\Rm^d)}.$$
\end{lemma}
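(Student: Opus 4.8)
The plan is to estimate the commutator $[1+|x|^\alpha,\Lambda_{\alpha+\beta}] = [|x|^\alpha,\Lambda_{\alpha+\beta}]$ by working on the Fourier side, where $\Lambda_{\alpha+\beta}$ has symbol $(1+|\xi|^2)^{(\alpha+\beta)/2}$, and treating $|x|^\alpha$ via its action as a Fourier multiplier/convolution kernel. Since $\alpha, \beta > 0$ and $\alpha+\beta < 1$, both $|x|^\alpha$ and the symbol $(1+|\xi|^2)^{(\alpha+\beta)/2}$ are "sub-linear" objects, so one expects the commutator to behave like an operator of order strictly less than $\alpha+\beta$ — in fact like order $\beta$ (with an $\eps$ loss), which is exactly the claimed gain. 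The cleanest route is to write the commutator kernel: formally, for $\varphi \in \calS$,
$$
\big([|x|^\alpha,\Lambda_{\alpha+\beta}]\varphi\big)(x) = \int_{\Rm^d} \big(|x|^\alpha - |y|^\alpha\big) K_{\alpha+\beta}(x-y)\,\varphi(y)\,dy,
$$
where $K_{\alpha+\beta}$ is the (distributional) kernel of $\Lambda_{\alpha+\beta}$, i.e. the inverse Fourier transform of $(1+|\xi|^2)^{(\alpha+\beta)/2}$. Using $\big||x|^\alpha-|y|^\alpha\big| \le C|x-y|^\alpha$ (the elementary subadditivity inequality for $\alpha \in (0,1)$), the kernel is dominated by $C|x-y|^\alpha |K_{\alpha+\beta}(x-y)|$, so the commutator is controlled by convolution against $|z|^\alpha |K_{\alpha+\beta}(z)|$, whose Fourier transform one checks to be a symbol of order $(\alpha+\beta) - \alpha = \beta$, up to logarithmic/$\eps$ corrections coming from the non-smoothness at the origin in $z$.

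More precisely, the steps I would carry out are: (i) Reduce, as above, to bounding the operator with kernel $|x-y|^\alpha K_{\alpha+\beta}(x-y)$ on $L^2$, mapping $H^{\beta+\eps} \to L^2$; equivalently, show the Fourier multiplier $m(\xi)$ obtained from the symbol of this convolution operator satisfies $|m(\xi)| \le C(1+|\xi|)^{\beta+\eps}$. (ii) Compute the symbol: $|z|^\alpha K_{\alpha+\beta}(z)$ has Fourier transform given, up to constants, by applying the fractional Laplacian $(-\Delta_\xi)^{\alpha/2}$ (or the Riesz-potential-type operator $|D_\xi|^{-\alpha}$, depending on the sign convention for $|z|^\alpha$ versus $|z|^{-\alpha}$) to $(1+|\xi|^2)^{(\alpha+\beta)/2}$. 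Since $|z|^\alpha = |z|^{-(d-\alpha')}$ with $\alpha' = d+\alpha$ only formally, it is cleaner to write $|z|^\alpha = c_{d,\alpha}\, |z|^{-(d - (d+\alpha))}$... — instead I would use directly that multiplication by $|z|^\alpha$ on the $z$-side corresponds, on the $\xi$-side, to the operator $(-\Delta_\xi)^{\alpha/2}$ acting on the tempered distribution $\widehat{K_{\alpha+\beta}} = (1+|\xi|^2)^{(\alpha+\beta)/2}$, by the standard identity $\calF(|z|^\alpha f) = c\,(-\Delta)^{\alpha/2}\widehat f$ (valid as tempered distributions for $\alpha \in (0,2)$, or by splitting $|z|^\alpha$ into a smooth-at-origin piece plus a compactly supported piece). (iii) Estimate $(-\Delta_\xi)^{\alpha/2}(1+|\xi|^2)^{(\alpha+\beta)/2}$ using the pointwise formula \fref{locfrac} for the fractional Laplacian together with the smoothness and growth of $\xi \mapsto (1+|\xi|^2)^{(\alpha+\beta)/2}$ (whose derivatives of order $k$ decay like $(1+|\xi|)^{\alpha+\beta-k}$); splitting the integral in \fref{locfrac} into $|x-y| \le 1$ (handled by a second-order Taylor expansion, giving a contribution $\lesssim \sup_{|y-x|\le1}|D^2 g(y)|\lesssim (1+|\xi|)^{\alpha+\beta-2}$, integrable near zero since $\alpha<2$) and $|x-y| > 1$ (handled by the decay of $g$ itself, giving $\lesssim (1+|\xi|)^{\alpha+\beta}$ with a gain, or at worst $(1+|\xi|)^{\alpha+\beta-\alpha} = (1+|\xi|)^\beta$), one obtains the bound $|(-\Delta_\xi)^{\alpha/2} g(\xi)| \le C(1+|\xi|)^{\beta}$. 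The $\eps$ in the statement absorbs any borderline logarithmic factor arising when $\alpha$ or $\beta$ is such that an exponent hits an integer, or when handling the non-smoothness of $|z|^\alpha$ at $z=0$ after the split. (iv) Conclude: the commutator is then bounded in operator norm from $H^{\beta+\eps}(\Rm^d)$ to $L^2(\Rm^d)$ by $C$, since a Fourier multiplier with symbol $\lesssim (1+|\xi|)^{\beta+\eps}$ maps $H^{\beta+\eps}\to L^2$ isometrically up to the constant. The hypothesis $\alpha+\beta<1$ is used to keep $\Lambda_{\alpha+\beta}$ "of order $<1$" so that the first-order-difference bound $||x|^\alpha-|y|^\alpha|\lesssim|x-y|^\alpha$ is the right tool and no cancellation beyond it is needed; the restriction $\alpha \le 1/4$ from the ambient lemma is not needed here, only $\alpha,\beta>0$ with $\alpha+\beta<1$.

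The main obstacle I anticipate is step (iii): making rigorous the claim that $\calF(|z|^\alpha K_{\alpha+\beta})$ is a genuine function (not merely a distribution) with the stated polynomial bound, and doing the fractional-Laplacian-of-a-symbol estimate carefully enough that the decay gained at spatial infinity in $z$ is correctly converted into the gain from $\alpha+\beta$ down to $\beta$ in $\xi$. A clean way to organize this is to decompose $|z|^\alpha = \eta(z)|z|^\alpha + (1-\eta(z))|z|^\alpha$ with $\eta \in C_0^\infty$ equal to $1$ near the origin: the first piece is compactly supported, hence its convolution with $K_{\alpha+\beta}$ contributes a multiplier that is smooth minus a mild singularity, bounded by $C(1+|\xi|)^{\alpha+\beta}$ but acting only on the (compactly supported, hence smoothing) near-diagonal part; the second piece is smooth and grows like $|z|^\alpha$, and since $K_{\alpha+\beta}$ decays rapidly away from the origin, this product is Schwartz-class away from $0$, giving a bounded (order-zero) contribution. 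It is really only the near-diagonal interaction that carries weight, and there the bound $||x|^\alpha-|y|^\alpha|\le C|x-y|^\alpha$ combined with $|K_{\alpha+\beta}(z)| \lesssim |z|^{-d-(\alpha+\beta)}$ near $z=0$ gives an operator whose kernel is $\lesssim |x-y|^{-d-\beta}$, i.e. a fractional-integration-type kernel of the right order, so that the $L^2\to L^2$ bound after applying $\Lambda_{-(\beta+\eps)}$ on the left follows from the Hardy–Littlewood–Sobolev/Kato–Seiler–Simon-type reasoning already invoked in the paper.
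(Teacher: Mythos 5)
Your proposal is morally dual to the paper's argument but contains a genuine gap at the final step, and I don't think the gap can be closed without essentially passing to the Fourier-side formulation that the paper uses.

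The problem is in the kernel estimate at the end. You bound the commutator kernel $(|x|^\alpha-|y|^\alpha)K_{\alpha+\beta}(x-y)$ in absolute value by $|x-y|^\alpha\,|K_{\alpha+\beta}(x-y)|\lesssim |x-y|^{-d-\beta}$ near the diagonal and call this ``a fractional-integration-type kernel of the right order''. It is not: Riesz/fractional-integration kernels have the form $|x-y|^{-(d-s)}$ with $0<s<d$, which are \emph{less} singular than $|x-y|^{-d}$ and hence locally integrable; the kernel $|x-y|^{-d-\beta}$ with $\beta>0$ is \emph{more} singular than $|x-y|^{-d}$, is not locally integrable, and is the kernel of a fractional \emph{derivative} of order $\beta$ (a principal-value/cancellation object), not a fractional integral. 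So neither Hardy--Littlewood--Sobolev nor a Schur/Young majorization applies once you take absolute values; you have thrown away exactly the cancellation that makes the operator well-defined, and the claimed reduction to a benign convolution operator breaks down.

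The paper avoids this by applying the H\"older continuity step in the \emph{Fourier} variable rather than the physical variable, and this direction turns out to be the favorable one. Using $\calF_x(|x|^\alpha)(\xi)=C_{\alpha,d}|\xi|^{-d-\alpha}$, the Fourier transform of the commutator acting on $\varphi$ is
$$
F(\xi)=C_{\alpha,d}\int_{\Rm^d}\frac{(1+|k|^2)^{(\alpha+\beta)/2}-(1+|\xi|^2)^{(\alpha+\beta)/2}}{|\xi-k|^{d+\alpha}}\,\calF_x\varphi(k)\,dk.
$$
Now the role of the H\"older bound is played by the symbol: since $u\mapsto(1+u^2)^{\gamma/2}$ is $\gamma$-H\"older for $\gamma\in(0,1)$ (this is precisely where $\alpha+\beta<1$ enters), one gets $|(1+|k|^2)^{(\alpha+\beta)/2}-(1+|\xi|^2)^{(\alpha+\beta)/2}|\le C|k-\xi|^{\alpha+\beta}$, and the Fourier-side kernel is majorized by $C|\xi-k|^{-(d-\beta)}$, which \emph{is} a genuine Riesz kernel. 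Hardy--Littlewood--Sobolev then gives $\|F\|_{L^2}\le C\|\calF_x\varphi\|_{L^{2d/(d+2\beta)}}$, and a H\"older inequality with the weight $(1+|\xi|^{\beta+\eps})$ (finite because $\eps>0$) converts this to the $H^{\beta+\eps}$ norm. The asymmetry between the two pictures is not cosmetic: the Fourier-side denominator $|\xi-k|^{d+\alpha}$ comes from a \emph{negative}-order object ($\widehat{|x|^\alpha}$), so after cancelling $\alpha+\beta$ powers you land at a locally integrable order $-(d-\beta)$; on the physical side the denominator comes from the \emph{positive}-order kernel $K_{\alpha+\beta}\sim|z|^{-d-(\alpha+\beta)}$, so after cancelling only $\alpha$ powers you are still at the non-integrable order $-(d+\beta)$. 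Your intuition about where the gain of $\alpha$ derivatives comes from is correct, but to close the estimate you should carry it out on the Fourier side as above.
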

\begin{proof}
We proceed in the Fourier space. First, the Fourier transform of $|x|^\alpha$ reads, see \cite{gelfand}, section 3.3,
$$
\calF_x(|x|^\alpha)(\xi)= C_{\alpha,d} |\xi|^{-d-\alpha}, \qquad C_{\alpha,d}= 2^{\alpha+d} \pi^{\frac{d}{2}} \frac{\Gamma(\frac{\alpha+d}{2})}{\Gamma(-\frac{\alpha}{2})}.
$$
Therefore,
\bee
\calF_x \left(\big[1+|x|^\alpha,\Lambda_{\alpha+\beta}\big ]\varphi\right)(\xi)&=&C_{\alpha,d}\int_{\Rm^d} \frac{(1+|k|^2)^{(\alpha+\beta)/2}-(1+|\xi|^2)^{(\alpha+\beta)/2}}{|\xi-k|^{d+\alpha}} \calF_x \varphi(k) dk\\
&:=&F(\xi).
\eee
Since the function $ \Rm \ni u \mapsto (1+u^2)^{\gamma/2}$ is of H\"older regularity $\gamma$ for $\gamma \in (0,1)$, there exists a constant $C$ such that
$$
|(1+|k|^2)^{(\alpha+\beta)/2}-(1+|\xi|^2)^{(\alpha+\beta)/2}| \leq C||k|-|\xi||^{\alpha+\beta} \leq C |k-\xi|^{\alpha+\beta}.
$$ 
Hence, $F$ can be estimated by 
$$
|F(\xi)|\leq C \int_{\Rm^d} \frac{|\calF_x \varphi(k)| }{|\xi-k|^{d-\beta}} dk.
$$
Using the estimate \fref{riesz} on Riesz potentials, we find, together with the H\"older inequality for the second line,
\bee
\|F\|_{L^2(\Rm^d)} &\leq& C \| \calF_x \varphi\|_{L^{\frac{2d}{d+2\beta}}(\Rm^d)}\\
&\leq & C \|(1+|\xi|^{(\frac{\beta+\eps}{\beta})d})^{-1}\|_{L^1(\Rm^d)}  \| (1+|\xi|^{\beta+\eps})\calF_x \varphi\|_{L^{2}(\Rm^d)}\\[2mm]
&\leq & C \| (1+|\xi|^{\beta+\eps})\calF_x \varphi\|_{L^{2}(\Rm^d)}.
\eee
This concludes the proof of the lemma.
\end{proof}

\medskip

With the latter lemma at hand, the estimation of $I_2$ is now straightforward: we have, for any $\eps>0$,
\bee
\|I_2\|_{L^2(\Rm^d)}&=&\left\|\big[1+|x|^\alpha,\Lambda_{\alpha+\beta}\big] \int_{\Rm } dt e^{-i t \Delta} u_t \right\|_{L^2(\Rm^d)}\\
& \leq& C \left\| \Lambda_{\beta+\eps} \int_{\Rm } dt e^{-i t \Delta} u_t \right\|_{L^2(\Rm^d)}\\
&\leq & C \| \phi_t\|_{L^{r'}(\Rm^d)},
\eee
where we used Theorem \ref{thCS} in the last line with
$$
\beta+\eps < \frac{d+1}{r}-\frac{d}{2}, \qquad r'\in[1,2), \qquad \textrm{and} \qquad \beta+\alpha \leq \frac{1}{2}, \qquad r'=2.
$$
With for instance $\eps=\alpha$, the latter condition is implied by \fref{mainconst}. Note as well that the relation above implies $\beta \leq 1/2$, so that with the fact that $\alpha \leq 1/4$ according to the hypotheses of Lemma \ref{pr1}, we have $\alpha+\beta<1$ and Lemma \ref{com} can indeed be applied. 

In order to conclude the proof of Lemma \ref{pr1}, it remains to treat the term $I_1$, which is a direct consequence of Theorem \ref{thCS}. This ends the proof.

\section{Proof of Corollary \ref{cor1}} \label{proofcor}
As in the proof of Theorem \ref{th1}, we focus on $n_{\gamma_\beta}$, and consider a nonnegative finite rank ``smooth'' operator $\varrho$ verifying \fref{condrho}, the final estimates following by density. This will justify the formal calculations. In this setting,  we have in particular that $\Gamma (t) \in \calJ_1$ and $(\mu_j(t))^{1/2}\phi_j(t) \in \calS(\Rm^d)$ for all $t \in \Rm$, for $(\mu_j,\phi_j)_{p\in \Nm}$ the spectral decomposition of $\Gamma$. For $v\in C_0^\infty(\Rm^d)$, let then 
$$
n^v(t,x):=|v(x)|^2 n_{\Gamma}(t,x)=\sum_{j\in \Nm} \mu_j(t) |\phi^v_j(t,x)|^2, \qquad \phi^v_j:=v \phi_j.
$$
The proof consists in applying $D^\beta$ to $n^v$ and in using the estimates of Theorem \ref{th1}. This requires some care since there is no simple product rule for the fractional derivative. Let $\varphi \in \calS(\Rm^d)$. Then,
with the local representation of the fractional Laplacian \fref{locfrac} and the identity $|a|^2-|b|^2= 2\Re\, (a-b)(\overline{a}+\overline{b})$, we find, for $\beta \in (0,1/2]$,
\bee
D^\beta(|\varphi|^2)(x)&=&c_{d,\beta} \int_{\Rm^d} \frac{|\varphi(x)|^2-|\varphi(y)|^2}{|x-y|^{d+\beta}}dy\\
&=&2 c_{d,\beta} \Re\int_{\Rm^d} \frac{(\varphi(x)-\varphi(y))(\overline{\varphi(y)}-\overline{\varphi(x)}+2\overline{\varphi(x)} )}{|x-y|^{d+\beta}}dy\\[2mm]
&=&4 \Re \; \overline{\varphi} D^\beta(\varphi)(x)-2 c_{d,\beta} |\calD_{\beta/2}\varphi|^2,
\eee
where
$$
\calD_{\beta/2}(\varphi)=\left(\int_{\Rm^d} \frac{|\varphi(x)-\varphi(y)|^2}{|x-y|^{d+\beta}}dy\right)^{1/2}.
$$
Replacing $\varphi$ by $ (\mu_j(t))^{1/2}\phi_j^v(t)$, we obtain the following expression of $D^\beta n^v$:
$$
D^\beta n^v= 4 \Re \, \sum_{j\in \Nm} \mu_j\overline{\phi^v_j} D^\beta(\phi^v_j)-2 c_{d,\beta} \, \sum_{j\in \Nm} \mu_j |\calD_{\beta/2}(\phi^v_j)|^2.
$$
The first term of the r.h.s. can be thought of as the most singular term, and can be treated with Theorem \ref{th1} after some technicalities. An important point is the fact that we do not use the triangle inequality for the estimation, and therefore the fact that the $\phi_j$'s are orthogonal is directly exploited. The second term is more critical. To the best of our knowledge, there are not many ways to estimate $\calD_{\beta/2}(\phi^v_j)$, and one is given in \cite{stein1}, Theorem 1, and provides controls of the $L^p$ norm of $\calD_{\beta/2}(\phi_j^v)$ in terms of the $L^p$ norm of $\Lambda_{\beta/2}\phi^v_j$. This means that we need to use the triangle inequality at this stage. This does not imply that the orthogonality of the $\phi_p$'s is not used at all, it is when we invoke Theorem \ref{th1} when $r=2$ in the case $p>1$. These latter facts explain the limitation to the $L^1$ norm in time. The results could be extended to higher norms in time if we were able to replace $\Lambda_\beta$ by $\calD_{\beta}$  in the definition of $\gamma_\beta$, and prove an analog to Theorem \ref{th1}. This does not seem trivial since $\calD_{\beta}$ is not linear.

Using therefore the Cauchy-Schwarz and triangle inequalities, we find
\bea \nonumber
\|D^\beta n^v\|_{L^{\frac{d}{d-\beta}}(\Rm_x^d)}&\leq& C \left\| \left(\sum_{j\in \Nm} \mu_j |\phi^v_j|^2\right)^{1/2}\left(\sum_{j\in \Nm} \mu_j |D^\beta(\phi^v_j)|^2\right)^{1/2} \right\|_{L^{\frac{d}{d-\beta}}(\Rm^d_x)}\\
&& \qquad +C \sum_{j\in \Nm} \mu_j\big\|\calD_{\beta/2}(\phi_j^v) \big\|_{L^{\frac{2d}{d-\beta}}(\Rm^d_x)}^2. \label{Dn}
\eea
Thanks to \cite{stein1}, Theorem 1, we can control the term in $\calD_{\beta/2}(\phi_j^v)$ by (remark that obviously $2d/(d+\beta)<2d/(d-\beta)$),
$$
\|\calD_{\beta/2}(\phi_j^v) \big\|_{L^{\frac{2d}{d-\beta}}(\Rm^d_x)} \leq C \|\Lambda_{\beta/2} \phi_j^v\|_{L^{\frac{2d}{d-\beta}}(\Rm^d_x)} \leq C \|\Lambda_{\beta} \phi_j^v\|_{L^{2}(\Rm^d_x)},
$$
where we used the Sobolev embedding $H^{\beta/2,2}(\Rm^d) \subset L^{\frac{2d}{d-\beta}}(\Rm^d)$ for the second inequality. Furthermore, using the H\"older inequality and the embedding $H^{\beta,2}(\Rm^d) \subset L^{\frac{2d}{d-2\beta}}(\Rm^d)$ for the first term in the r.h.s of \fref{Dn}, we find
\bee
\|D^\beta n^v\|_{L^{\frac{d}{d-\beta}}(\Rm_x^d)}&\leq& C \sum_{j\in \Nm} \mu_j \left( \|\Lambda^\beta(\phi^v_j)\|^2_{L^2(\Rm_x^d)}+\|D^\beta(\phi^v_j)\|^2_{L^2(\Rm_x^d)}\right)\\
&\leq & C  \sum_{j\in \Nm} \mu_j \left( \|\phi^v_j\|^2_{L^2(\Rm^d_x)}+\|D^\beta(\phi^v_j)\|^2_{L^2(\Rm_x^d)} \right).
\eee
Above, we used \cite{stein1}, Theorem 2, in order to estimate $\Lambda_\beta$ in terms of $D^\beta$. We control now $D^\beta( v \phi_p)$ in terms of $v D^\beta(\phi_p)$. We have, for all $\varphi \in \calS(\Rm^d)$,
\bee
D^\beta(v \varphi)(x)&=&c_{d,\beta} \int_{\Rm^d} \frac{v(x)\varphi(x)-v(y)\varphi(y)}{|x-y|^{d+\beta}}dy\\
&=&c_{d,\beta} \int_{\Rm^d} \frac{v(x)(\varphi(x)-\varphi(y))+(v(x)-v(y))\varphi(y)}{|x-y|^{d+\beta}}dy\\
&=&v(x)D^\beta(\varphi)(x)+c_{d,\beta} \int_{\Rm^d} \frac{v(x)-v(y)}{|x-y|^{d+\beta}}\varphi(y) dy.
\eee
Supposing that the support of $v$ is compactly embedded in a bounded set $\Omega$, we write the last term above as
$$
c_{d,\beta} \int_{\Omega} \frac{v(x)-v(y)}{|x-y|^{d+\beta}}\varphi(y) dy+c_{d,\beta} v(x)\int_{\Omega^c} \frac{\varphi(y)}{|x-y|^{d+\beta}} dy.
$$
Hence,
\begin{align*}
& \sum_{j\in \Nm} \mu_j(t) \|D^\beta(\phi^v_j(t,\cdot))\|^2_{L^2(\Rm^d_x)}=\left\| \sum_{j\in \Nm} \mu_j(t) |D^\beta(\phi^v_j(t,\cdot))|^2 \right\|_{L^1(\Rm^d_x)}\\
&\hspace{1.8cm} \leq C \left\| \sum_{j\in \Nm} \mu_j(t) |v D^\beta(\phi_j(t,\cdot))|^2 \right\|_{L^1(\Rm^d_x)}+C\left\| N_1(t,\cdot) \right\|_{L^1(\Rm^d_x)}+C\left\| N_2(t,\cdot) \right\|_{L^1(\Rm^d_x)},
\end{align*}
where we have defined
\bee
N_1(t,x)&:=&\sum_{j\in \Nm} \mu_j(t)\left|\int_{\Omega} \frac{v(x)-v(y)}{|x-y|^{d+\beta}}\phi_j(t,y) dy \right|^2\\
 N_2(t,x)&:=& \sum_{j\in \Nm} \mu_j(t)\left|v(x)\int_{\Omega^c} \frac{\phi_j(t,y)}{|x-y|^{d+\beta}} dy \right|^2.
\eee
Writing the squares above as the product of two integrals, and using the Cauchy-Schwarz inequality lead to
$$
N_1(t,x) \leq \left|\int_{\Omega} \frac{|v(x)-v(y)|}{|x-y|^{d+\beta}} (n_{\Gamma}(t,y))^{1/2} dy \right|^2, \quad N_2(t,x) \leq |v(x)|^2 \left|\int_{\Omega^c} \frac{(n_{\Gamma}(t,y))^{1/2} }{|x-y|^{d+\beta}} dy \right|^2.
$$
Using the estimate \fref{riesz} on Riesz potentials and the fact that $|v(x)-v(y)|\leq C |x-y|$, we find, when $d\geq 2$,
\bee
\left\| N_1(t,\cdot) \right\|_{L^1(\Rm^d_x)} &\leq& C \int_{\Rm^d} \left|\int_{\Omega} \frac{(n_{\Gamma}(t,y))^{1/2} }{|x-y|^{d+\beta-1}} dy \right|^2 dx\\[2mm]
& \leq &C \|n_{\Gamma}(t,\cdot)\|_{L^{\frac{d}{d+2(1-\beta)}}(\Omega)} \leq C  \|n_{\Gamma}(t,\cdot)\|_{L^{1}(\Omega)}.
\eee
When $d=1$, \fref{riesz} cannot be used as above. We then control $|v(x)-v(y)|$ instead by $|v(x)-v(y)|\leq C |x-y|^{1/2}$, and since $\beta \in (0,\frac{1}{2}]$, we can apply \fref{riesz} and obtain
$$
\left\| N_1(t,\cdot) \right\|_{L^1(\Rm_x)} \leq C \|n_{\Gamma}(t,\cdot)\|_{L^{\frac{1}{1+2(1/2-\beta)}}(\Omega)} \leq C  \|n_{\Gamma}(t,\cdot)\|_{L^{1}(\Omega)}.
$$
Regarding $N_2$, for $x \in \textrm{supp } v \subset \subset \Omega$ and $y \in \Omega^c$, there exists a constant $C$ such that $|x-y|^{-1} \leq C (1+|y|)^{-1}$, and therefore
$$
\left\| N_2(t,\cdot) \right\|_{L^1(\Rm^d_x)} \leq C \|n_{\Gamma}(t,\cdot)\|_{L^{q}(\Rm^d_x)}, \qquad \forall q\in[1,\infty].
$$
It remains to bound $n_{\Gamma}$. Note that according to the last estimate, this has to be done over $\Rm^d$ and not just on a bounded domain, and this is a consequence of the non-locality of the fractional derivative. Estimating $n_{\Gamma}$ is not trivial unless $\varrho$ is trace class so that the triangle inequality can be used. When $\varrho$ is not trace class, the orthogonality of the eigenfunctions plays again a crucial role, and we then use the Strichartz estimates of \cite{FrankSabin}, Theorem 15, to obtain 
$$
\|n_{\Gamma} \|_{L^{\frac{p'}{d}}(-T,T,L^{\frac{p}{2-p}}(\Rm^d))} \leq C \|\varrho \|_{L^1(\Rm,\calJ_p)}, \qquad p \in [1,2d/(2d-1)].
$$
We verify indeed that our choice of parameters above satisfy the assumptions of the theorem of \cite{FrankSabin}, using in particular that $p'>2d$ when $\beta \geq 0$. Collecting all previous estimates, we find
$$
\|D^{\beta}(n^v) \|_{L^1(-T,T,L^{\frac{d}{d-\beta}}(\Rm^d))} \leq  C \left\| \sum_{j\in \Nm} \mu_j |v D^\beta(\phi_j)|^2 \right\|_{L^1((-T,T) \times \Rm^d)} + C \|\varrho \|_{L^1(\Rm,\calJ_p)}
$$
and conclude by controlling the first term in r.h.s. using Theorem \ref{th1} with $r=2$ and $\beta$ satisfying the conditions stated in the corollary.  This ends the proof. 

{\footnotesize \bibliographystyle{siam}
  \bibliography{bibliography} }

 \end{document}